\newcommand{\mb}[1]{\mathbf{#1}}
\newcommand{\Var}{\mathrm{var}}
\newcommand{\Cov}{\mathrm{cov}}
\newcommand{\kcont}{k_{\textrm{cont}}}
\newcommand{\kcat}{k_{\textrm{cat}}}
\newcommand{\one}[1]{\mathbf{1}_{#1}}
\newcommand{\compound}[2]{\mb{\Gamma}^{\mathrm{CS}}_{#1} \left(#2\right)}
\newcommand{\groupEffect}{\mu}
\newcommand{\levelEffect}{\lambda}
\newcommand{\varIndiv}{v_\levelEffect} 
\newcommand{\varCentre}{v_\groupEffect} 
\newcommand{\varCentreGroup}{\mb{B}^\star}
\newcommand{\varCentreGroupCoef}[1]{B_{#1}^\star}
\newcommand{\varIndivG}[1]{\mb{W}_{#1}^\star}
\newcommand{\varIndivGbar}[1]{\overline{W_{#1}^\star}}
\newcommand{\varZeroAverageRed}[1]{\mb{M}_{#1}}
\newcommand{\constrastMat}[1]{\mb{A}_{#1}}
\newcommand{\J}[1]{\mb{J}_{#1}}
\newcommand{\I}[1]{\mb{I}_{#1}}
\newcommand{\nbGroup}{G}
\newcommand{\domain}{\mathcal{D}}
\newcommand{\group}{\mathcal{G}}
\newcommand{\within}{W}
\newcommand{\between}{\mb{B}}
\newcommand{\level}{L}
\newcommand{\levelIndex}{\ell}
\newcommand{\chol}{\mb{L}}
\newcommand{\matCovCat}{T}
\newcommand{\responseAnova}{\eta}
\renewcommand{\ell}{u}
\title{Group kernels for Gaussian process metamodels with categorical inputs}
\author[1]{O. Roustant}
\author[1]{E. Padonou}
\author[2]{Y. Deville}
\author[3]{A. Cl\'ement}
\author[4]{G. Perrin}
\author[4]{J. Giorla}
\author[5]{H. Wynn}
\affil[1]{{\small Mines Saint-\'{E}tienne, Univ. Clermont Auvergne, CNRS, UMR 6158 LIMOS, F--42023 Saint-\'{E}tienne, France}}
\affil[2]{{\small AlpeStat, Chamb\'ery, France}}
\affil[3]{{\small CEA/DAM/VA, F--21120, Is-sur-Tille, France}}
\affil[4]{{\small CEA/DAM/DIF, F--91297, Arpajon, France}}
\affil[5]{{\small London School of Economics, England}}
\date{} 
\begin{document}

\maketitle

\begin{abstract}
Gaussian processes (GP) are widely used as a metamodel for emulating time-consuming computer codes.
We focus on problems involving categorical inputs, with a potentially large number $\level$ of levels (typically several tens),
partitioned in $\nbGroup \ll \level$ groups of various sizes. 
Parsimonious covariance functions, or kernels, can then be defined by block covariance matrices $\mb{\matCovCat}$
with constant covariances between pairs of blocks and within blocks.
We study the positive definiteness of such matrices to encourage their practical use.
The hierarchical group/level structure, equivalent to a nested Bayesian linear model,
provides a parameterization of valid block matrices $\mb{\matCovCat}$. 
The same model can then be used when the assumption within blocks is relaxed,
giving a flexible parametric family of valid covariance matrices with constant covariances between pairs of blocks.
The positive definiteness of $\mb{\matCovCat}$ 
is equivalent to the positive definiteness of a smaller matrix of size $\nbGroup$,
obtained by averaging each block.
The model is applied to a problem in nuclear waste analysis,
where one of the categorical inputs is atomic number, 
which has more than 90 levels.
\end{abstract}

\section{Introduction} \label{sec:intro}

This research is motivated by the analysis of a time-consuming computer code in nuclear engineering,
depending on both continuous and categorical inputs, one of them having more than 90 levels.
The final motivation is an inversion problem.
However, due to the heavy computational cost, a direct usage of the simulator is hardly possible.
A realistic approach is to use a statistical emulator or metamodel. 
Thus, as a first step, we investigate the metamodelling of such computer code.
More precisely, we consider Gaussian process (GP) regression models, also called kriging models 
(\citep{Sacks_1989}, \citep{rasmussen2005}),
which have been successfully used in sequential metamodel-based strategies for uncertainty quantification
(see e.g. \citep{Chevalier_2014}).\\ 

Whereas there is a flourishing literature on GP regression, 
the part concerned with categorical inputs remains quite limited.
We refer to \citep{zhang2015} for a review.
As for continuous inputs, covariance functions or \textit{kernels}
are usually built by combination of 1-dimensional ones, 
most often by multiplication or, more rarely, by addition \citep{Deng2017}.
The question then comes down to constructing a valid kernel on a finite set,
which is a positive semidefinite matrix. 
Some effort has been spent on parameterization of general covariance matrices
\citep{Pinheiro1996} and parsimonious parameterizations of smaller classes \citep{pinheiro2009}.
Some block forms have also been proposed \citep{qian2007}, in order to deal with a potential large number of levels.
However, their validity (in terms of positive definiteness) was not investigated.
Furthermore, to the best of our knowledge, 
applications in GP regression are limited to categorical inputs with very few levels, typically less than 5.\\

Guided by the application, we investigate more deeply the so-called \textit{group kernels} cited in \citet{qian2007},
defined by block covariance matrices $\mb{\matCovCat}$ with constant covariances between pairs of blocks and within blocks.
We exploit the hierarchy group/level by revisiting a nested Bayesian linear model 
where the response term is a sum of a group effect and a level effect.
The level effects are assumed to sum to zero, which allows recovering negative within-group correlations.
This model leads to a parameterization of $\mb{\matCovCat}$ which is automatically positive definite.
Interestingly, the assumption on within blocks can be relaxed, 
and we obtain a parameterization of a wider class of valid group kernels.
The positive definiteness condition of $\mb{\matCovCat}$ is also explicited:
it is equivalent to the positive definiteness of the smaller covariance matrix 
obtained by replacing each block by its average.\\

As mentioned above, this work has some connections with Bayesian linear models 
as well as linear mixed effect models (see e.g. \citet{Lindley_Smith_1972}, \citet{smith1973}) 
 in a hierarchical view.
Other related works concern hierarchical GPs with a tree structure.
For instance, particular forms of group kernels are obtained in multiresolution GP models 
(\citep{multiresolutionGP_Fox_Dunson}, \citep{Park_Shoi_2010}). 
Such models usually assume that children are conditionally independent on the mother.
This is not the case in our model, due to the condition that the level effects sum to zero.\\

The paper is structured as follows. 
Section \ref{sec:background} gives some background on GP regression with mixed categorical and continuous inputs. 
Section \ref{sec:groupCovariance} presents new findings on group kernels.
Section \ref{sec:examples} illustrates on synthetic examples.
Section \ref{sec:application} is devoted to the application which motivated this work.
Section \ref{sec:conclusion} gives some conclusions and perspectives for future research.

\section{Background and notations} \label{sec:background}
\subsection{GPs with continuous and categorical variables}

We consider a set of $I$ continuous variables $\mb{x}_1, \dots, \mb{x}_I$ defined on a hypercubic domain $\Delta$,
and a set of $J$ categorical variables $u_1, \dots, u_J$ with $\level_1, \dots, \level_J$ levels.
Without loss of generality, we assume that $\Delta = [0,1]^I$ and that, for each $j=1, \dots, J$,
the levels of $u_j$ are numbered $1, 2, \dots, \level_j$.
We denote $\mb{\mb{x}} = (\mb{x}_1, \dots, \mb{x}_I)$, $\mb{u} = (u_1, \dots, u_J)$, and $\mb{\mb{w}} = (\mb{\mb{x}},\mb{u})$.\\

We consider GP regression models defined on the product space
\begin{equation*}
\label{eq:space}
\domain = [0, 1]^I \times \prod_{j=1}^J \lbrace 1, \dots, \level_j \rbrace,
\end{equation*}
and written as:  
\begin{equation}
\label{eq:GPR2}
y_i = \mu (\mb{w}^{(i)}) + Z({\mb{w}}^{(i)}) + \epsilon_i, \qquad i = 1, \dots, N.
\end{equation}
where $\mu$, $Z$ and $\epsilon$ are respectively the trend, the GP part and a noise term. 
There exist a wide variety of trend functions, as in linear models. 
Our main focus here is on the centered GP $Z(\mb{w})$, characterized by its kernel 
$$k: (\mb{\mb{w}}, \mb{\mb{w}'}) \mapsto \Cov \left( Z(\mb{\mb{w}}),  Z(\mb{\mb{w}'}) \right).$$
Kernels on $\domain$ can be obtained by combining kernels on $[0, 1]^I$ 
and kernels on $\prod_{j=1}^J \lbrace 1, \dots, \level_j \rbrace $. 
Standard valid combinations are the product, sum or ANOVA.
Thus if $\kcont$ denotes a kernel for the continuous variables $\mb{x}$,
$\kcat$ a kernel for the categorical ones $\mb{u}$, 
examples of valid kernels for $\mb{w} = (\mb{x},\mb{u})$ are written:
\begin{eqnarray*}
(\textrm{Product}) \qquad k(\mb{w},\mb{w}') &=& \kcont(\mb{x},\mb{x}')\kcat(\mb{u},\mb{u}') \\
(\textrm{Sum}) \qquad k(\mb{w},\mb{w}') &=& \kcont(\mb{x},\mb{x}') + \kcat(\mb{u},\mb{u}') \\
(\textrm{ANOVA}) \qquad k(\mb{w},\mb{w}') &=&  (1+\kcont(\mb{x},\mb{x}'))(1+\kcat(\mb{u},\mb{u}'))
\end{eqnarray*}
For consiseness, we will denote by $\ast$ one of the operations: sum, product or ANOVA.
The three formula above can then be summarized by:
\begin{equation} \label{eq:contCatComb}
k(\mb{w},\mb{w}') = \kcont(\mb{x},\mb{x}') \ast \kcat(\mb{u},\mb{u}')
\end{equation}

Then, in turn, $\kcont$ and $\kcat$ can be defined by applying these operations to $1$-dimensional kernels.
For continuous variables, famous $1$-dimensional kernels include squared exponential or Mat\'ern  \citep{rasmussen2005}. 
We denote by $\kcont^i(x_i, x'_i)$ such kernels ($i = 1, \dots, I$).
For a categorical variable, notice that, as a positive semidefinite function on a finite space, 
a kernel is a positive semidefinite matrix. 
We denote by $\mb{\matCovCat}_j$ the matrix of size $\level_j$ corresponding to kernels for $u_j$ ($j = 1, \dots, J$).
Thus, examples of expressions for $\kcont$ and $\kcat$ are written:
\begin{eqnarray} 
\kcont (\mb{x},\mb{x}') &=& \kcont^1(x_1, x'_1) \ast \dots \ast \kcont^I(x_I, x'_I) \label{eq:kContComb} \\
\kcat (\mb{u},\mb{u}') &=& \left[ \matCovCat_1 \right]_{u_1, u'_1} \ast \dots \ast \left[ \matCovCat_J \right]_{u_J, u'_J}
\label{eq:kCatComb}
\end{eqnarray}

The formulation given by Equations~(\ref{eq:contCatComb}), (\ref{eq:kContComb}), (\ref{eq:kCatComb})
is not the most general one, since kernels are not always obtained by combining 1-dimensional ones.
Nevertheless, it encompasses the GP models used in the literature of computer experiments with categorical inputs.
It generalizes the tensor-product kernels, very often used, and the sum used recently by \citep{Deng2017} on the categorical part.
It also contains the heteroscedastic case, since the matrices $\mb{\matCovCat}_j$ are not assumed to have a constant diagonal, 
contrarily to most existing works \citep{zhang2015}. 
This will be useful in the application of Section~\ref{sec:application}, 
where the variance of the material is level dependent.

\newtheorem{remIdentify}{Remark}
\begin{remIdentify} \label{rk:identifiability}
Combining kernels needs some care to obtain identifiable models.
For instance, the product of kernels $k_1, k_2$ with $k_i(x_i,x'_i)=\sigma_i^2 e^{-\vert x_i - x'_i \vert}$ ($i = 1, 2$),
is a kernel depending on only one variance parameter $\sigma^2 := \sigma_1^2 \sigma_2^2$.
The GP model is identifiable for this new parameter, but not for the initial parameters $\sigma_1^2,\sigma_2^2$.
\end{remIdentify}

\subsection{1-dimensional kernels for categorical variables} \label{sec:overview}

We consider here a single categorical variable $u$ with levels $1, \dots, \level$.
We recall that a kernel for $u$ is then a $\level$ by $\level$ positive semidefinite matrix $\mb{\matCovCat}$. 
 
\subsubsection{Kernels for ordinal variables} \label{sec:ordinalKernel}
A categorical variable with ordered levels is called ordinal. 
In this case, the levels can be viewed as a discretization of a continuous variable.
Thus a GP $Y$ on $\{1, \dots, \level\}$ can be obtained from a 1-dimensional GP $Z$ 
on the interval $[0,1]$ by using a non-decreasing transformation $F$ (also called warping):
$$ Y(u) = Z(F(u)).$$
Consequently, the covariance matrix $\mb{\matCovCat}$ can be written:
\begin{equation} \label{eq:ordinalKernel}
\matCovCat_{\levelIndex, \levelIndex'} = k_Z(F(\levelIndex), F(\levelIndex')), \quad \levelIndex, \levelIndex' = 1, \dots, \level.
\end{equation} 
When $k_Z(x, x')$ depends on the distance $\vert x - x' \vert$, then
$\matCovCat_{u, u'}$ 
depends on the distance between the levels $\levelIndex, \levelIndex'$, distorted by $F$.\\
In the general case, $F$ is piecewise-linear and defined by $\level-1$ parameters. 
However, a parsimonious parameterization may be preferred,  
based on the cdf of a flexible probability distribution such as the Normal or the Beta.
We refer to \citep{mccullagh1980} for examples in regression 
and to \citep{qian2007} for illustrations in computer experiments.\\
There is also some flexibility in the choice of the continuous kernel $k_Z$.
The standard Squared-Exponential or Mat\'ern kernels are admissible, but induce positive correlation between levels.
In order to allow negative correlations, one may choose, for instance, the cosine correlation kernel on $[0, \alpha)$:
\begin{equation} \label{eq:cosKernel}
k_Z(x, x') = \cos( x - x' ) 
\end{equation}
where $\alpha \in (0, \pi]$ is a fixed parameter tuning the minimal correlation value.
Indeed, (\ref{eq:cosKernel}) defines a decreasing function of $\vert x - x' \vert$ 
from $[0, \alpha]$ to $[\cos(\alpha), 1]$.
It is a valid covariance function obtained by choosing $\mu$ as a Dirac non-negative measure 
in Bochner theorem for real-valued stationary kernels:
$ k_Z(x, x') = \int \cos(\omega (x - x')) d\mu(\omega).$

\subsubsection{Kernels for nominal variables}
For simplicity we present here the homoscedastic case, 
i.e. when $\mb{\matCovCat}$ has a constant diagonal. 
It is immediately extended to situations where the variance depends on the level,
by considering the correlation matrix.

\paragraph{General parametric covariance matrices.} \label{sec:parametricCovMat} 
There are several parameterizations of positive-definite matrices based on the spectral and Choleky decompositions.
The spectral decomposition of $\mb{\matCovCat}$ is written
\begin{equation}
\mb{\matCovCat} = \mb{P} \mb{D} \mb{P}^\top
\end{equation} 
where $\mb{D}$ is diagonal and $\mb{P}$ orthogonal. 
Standard parameterizations of $\mb{P}$ involve the Cayley transform, 
Eulerian angles, Householder transformations or Givens rotations, as detailed in \citep{Khuri1989} and \citep{Ron2015}. 
Another general parameterization of $\mb{\matCovCat}$ is provided by the Cholesky decomposition:
\begin{equation} 
\mb{\matCovCat} = \mb{\chol} \mb{\chol}^\top,
\end{equation} 
where $\mb{\chol}$ is lower triangular. 
When the variance $\matCovCat_{\ell, \ell}$ does not depend on the level $\ell$, 
the columns of $\mb{\chol}$ have the same norm and represent points on a sphere in $\mathbb{R}^\level$. 
A spherical parameterization of $\mb{\chol}$ is then possible with one variance term and $\level(\level-1)/2$ angles, 
representing correlations between levels \cite[see e.g.][]{Pinheiro1996}. 
  
\paragraph{Parsimonious parameterizations.} \label{sec:parsimonious}
The general parametrizations of $\mb{\matCovCat}$ described above require $O(\level^2)$ parameters. 
More parsimonious ones can be used, up to additional model assumptions.
Among the simplest forms, the compound symmetry (CS) - often called exchangeable - covariance matrix
assumes a common correlation for all levels \citep[see e.g.][]{pinheiro2009}.
The CS matrix with variance $v$ and covariance $c$ is defined by:
\begin{equation}
\label{eq:compsym}
\matCovCat_{\levelIndex, \levelIndex'} = \left\lbrace \begin{matrix}
    v  & \text{ if } \levelIndex = \levelIndex'\\
	c  & \text{ if } \levelIndex \neq \levelIndex' \\
    \end{matrix} \right. 
    , \quad c/v \in \left(  -1/(\level-1), 1\right).
\end{equation}
This generalizes the kernel obtained by substituting the Gower distance $d$ \citep{gower1982} 
into the exponential kernel, corresponding to $c/v = e^{-d^2} >0$.\\
The CS covariance matrix treats equally all pairs of levels, which is an important limitation, especially when $\level \gg 1$. 
More flexibility is obtained by considering groups of levels. 
Assume that the $\level$ levels of $u$ are partitioned in $\nbGroup$ groups $\group_1, \dots, \group_\nbGroup$ 
and denote by $g(\levelIndex)$ the group number corresponding to a level $\levelIndex$.
Then a desired parameterization of $\mb{\matCovCat}$ is given by the block matrix (see e.g. \citet{qian2007}):
\begin{equation}
\label{eq:groupCovMatrix}
\matCovCat_{\levelIndex, \levelIndex'} = \left\lbrace \begin{matrix}
    v & \text{ if }  \levelIndex = \levelIndex'\\
	c_{g(\levelIndex), g(\levelIndex')} & \text{ if } \levelIndex \neq \levelIndex'\\
	\end{matrix} \right.
\end{equation}
where for all $i,j \in \{ 1, \dots, \nbGroup \}$, the terms $c_{i,i}/v$ are within-group correlations, 
and $c_{i,j}/v$ ($i \neq j$) are between-group correlations.
Notice that additional conditions on the $c_{i,j}$'s are necessary to ensure that $\mb{\matCovCat}$ is a valid covariance matrix, 
which is developed in the next section.

\section{Generalized compound symmetry block covariance matrices} \label{sec:groupCovariance}

We consider the framework of Section~\ref{sec:parsimonious} where 
$u$ denotes a categorical variable whose levels are partitioned 
in $\nbGroup$ groups $\group_1, \dots, \group_\nbGroup$ of various sizes $n_1, \dots, n_\nbGroup$.
Without loss of generality, we 
assume that $\group_1=\{1, \dots, n_1\}, \group_2 = \{n_1+1, \dots, n_1+n_2\}, \dots$. 
We are interested in parsimonious parameterizations of the covariance matrix $\mb{\matCovCat}$,
written in block form:
\begin{equation}
\mb{\matCovCat} = \begin{pmatrix}  \label{eq:blockCorMat}
    \mb{\within}_1  & \between_{1,2} & \cdots & \between_{1,\nbGroup} \\
    \between_{2,1} & \within_2 & \ddots & \vdots \\  
    \vdots & \ddots & \ddots & \between_{\nbGroup-1, \nbGroup} \\
	\between_{\nbGroup,1} & \cdots & \between_{\nbGroup, \nbGroup-1} & \mb{\within}_\nbGroup  \\
    \end{pmatrix}
\end{equation} 
where the diagonal blocks $\mb{\within}_g$ contain within-group covariances, 
and the off-diagonal blocks $\between_{g,g'}$ are constant matrices containing between-group covariances.
We denote:
$$ \between_{g,g'} = c_{g,g'} \J{n_g, n_g'}, \qquad g \neq g' \in \{ 1, \dots, \nbGroup \}$$
where $\J{s,t}$ is the $s$ by $t$ matrix of ones. 
This means that the between-group covariances only depends on groups (and not on levels).\\

Although block matrices of the form (\ref{eq:blockCorMat}) may be covariance matrices, 
they are not positive semidefinite in general. 
A necessary condition is that all diagonal blocks $\mb{\within}_g$ are positive semidefinite. 
But it is not sufficient. 
In order to provide a full characterization, we will ask a little more, 
namely that they remain positive semidefinite when removing the mean:
\begin{equation} \label{eq:GCSdef}
\mb{\within}_g - \overline{\within_g} \J{n_g} \quad  \textrm{is positive semidefinite, for all} \quad  g=1, \dots, \nbGroup
\end{equation} 
where $\J{n_g}$ is matrix of ones of size $n_g$ and $\overline{\within_g}$ is the average of $\mb{\within}_g$ coefficients.
This condition will appear naturally in Subsection \ref{sec:hierarchicalGP}.
Notice that valid CS covariance matrices satisfy it.
Indeed, if $\mb{W}$ is a positive semidefinite matrix with variance $v$ and covariance $c$, then 
$\mb{W} - \overline{W} \J{n} = (v-c) \mb{P}$ 
where $\mb{P} = \I{n} - n^{-1} \J{n}$ verifies $\mb{P} = \mb{P} \mb{P}^\top$, which is positive semidefinite.  
For this reason, we will call matrices with
\emph{Generalized Compound Symmetry (GCS)}, 
block matrices of the form (\ref{eq:blockCorMat}) verifying (\ref{eq:GCSdef}).
In particular, the class of GCS block matrices contains block matrices of the form (\ref{eq:groupCovMatrix}). \\

The rest of the section is organized as follows.
Subsection \ref{sec:oneWayANOVA} shows how valid CS covariance matrices can be parameterized by a Gaussian model.
The correspondence is ensured, thanks to a centering condition on the effects of the levels.
Subsection \ref{sec:centeredCovMat} gives material on centered covariance matrices.
Subsection \ref{sec:hierarchicalGP} contains the main results. 
It extends the model of Subsection \ref{sec:oneWayANOVA} to a GCS block matrix. 
This gives a proper characterization of positive semidefinite GCS block matrices,
as well as a parameterization which automatically fulfills the positive semidefinite conditions. 
Subsection \ref{sec:relatedWorks} indicates connections with related works.
Finally, in Subsection \ref{sec:summary}, we collect together the details of our parameterization, 
for ease of reference.\\

\subsection{A Gaussian model for CS covariance matrices} \label{sec:oneWayANOVA}
We first focus on the case of a CS matrix. 
The following additional notations will be used: 
for a given integer $\level \geq 1$, 
$\I{\level}$ is the identity matrix of size $\level$, 
$\one{\level}$ is the vector of ones of size $L$.
We denote by
\begin{equation} \label{eq:CSmat}
\compound{\level}{v,c} = (v-c) \I{\level} + c \J{\level}
\end{equation}
the CS matrix with a common variance term $v$ and a common covariance term $c$.
It is well-known that $\compound{\level}{v,c}$ is positive definite if and only if 
\begin{equation} \label{eq:CSpdCondition}
-(\level-1)^{-1}v < c < v.
\end{equation}
For instance, one can check that the eigenvalues of $\compound{\level}{v,c}$
are ${v + (\level - 1)c}$ with multiplicity 1 (eigenvector $\one{\level}$) and 
$v - c$ with multiplicity $\level - 1$ (eigen-space $\one{\level}^\perp$). 
Notice that a CS matrix is positive definite 
for a range of negative values of its correlation term.\\
 
Then we consider the following Gaussian model:
\begin{equation} \label{eq:CSmodel}
\responseAnova_\levelIndex = \groupEffect + \levelEffect_\levelIndex, \qquad \levelIndex = 1, \dots, \level
\end{equation}
where $\groupEffect \sim \mathcal{N}(0, \varCentre)$ with $\varCentre > 0$,
and $\levelEffect_1, \dots, \levelEffect_\level$ are i.i.d. random variables from $\mathcal{N}(0, \varIndiv)$,
with $\varIndiv > 0$,
assumed to be independent of $\groupEffect$.

A direct computation shows that the covariance matrix of $\bm{\responseAnova}$ 
is the CS covariance matrix $\compound{\level} {\varCentre + \varIndiv, \varCentre}$.
Clearly this characterizes the subclass of positive definite CS covariance matrices $\compound{\level}{v,c}$ 
such that $c$ is non-negative.
The full parameterization, including negative values of $c$ in the range ${(-(\level-1)^{-1}v, 0)}$,
 can be obtained by restricting the average of level effects to be zero, 
as detailed in the next proposition.

\newtheorem{prop1}{Proposition}
\begin{prop1} \label{prop:CSparam}
When $\bm{\responseAnova}$ and $\bm{\levelEffect}$ are related as in (\ref{eq:CSmodel}), 
the covariance of $\bm{\responseAnova}$ conditional on zero average errors $\overline{\levelEffect}= 0$
is a CS matrix with variance
$v = v_{\groupEffect} + v_{\levelEffect}[1-1/\level]$ and covariance
$c = v_{\groupEffect} - v_{\levelEffect}/\level$. 
Conversely, given a CS covariance matrix~$\mb{C}$ with variance $v$ and covariance $c$,
there exists a representation (\ref{eq:CSmodel}) such that $\mb{C}$ is the covariance of $\bm{\responseAnova}$ 
conditional on zero average errors $\overline{\levelEffect} =0$ 
where $v_\groupEffect = v/\level + c[1-1/\level]$ and $v_{\levelEffect} = v - c$.
\end{prop1}

\subsection{Parameterization of centered covariance matrices} \label{sec:centeredCovMat}
The usage of Model~(\ref{eq:CSmodel}) to describe CS covariance matrices involves Gaussian vectors that sum to zero.
This is linked to centered covariance matrices, i.e. covariance matrices $\varIndivG{}$ such that $\varIndivGbar{} = 0$, 
as detailed in the next proposition. We further give a parameterization of centered covariance matrices.

\newtheorem{prop2}[prop1]{Proposition}
\begin{prop2} \label{prop:covDegenerate}
Let $\varIndivG{}$ be a covariance matrix of size $\level \geq 2$. 
Then, $\varIndivG{}$ is centered iff there exists a Gaussian vector 
$\mb{z}$ on $\mathbb{R}^\level$ such that $\varIndivG{} = \Cov(\mb{z} \vert {\overline{z}=0})$.
In that case, let $\constrastMat{}$ be a $\level \times (\level-1)$ matrix whose columns form an orthonormal basis of $\one{\level}^\perp$.
Then $\varIndivG{}$ is written in an unique way
\begin{equation} \label{eq:FgParam}
\varIndivG{} = \constrastMat{} \varZeroAverageRed{} \constrastMat{}^\top
\end{equation}
where $\varZeroAverageRed{}$ is a covariance matrix of size $\level-1$.\\
In particular if $\varIndivG{} = v[\I{\level} - \level^{-1}\J{\level}]$ is a centered CS covariance matrix, then $\varZeroAverageRed{} = v \I{\level-1}$, 
and we can choose $\mb{z} \sim \mathcal{N}(0, v \I{\level})$. 
\end{prop2}

The choice of $\constrastMat{}$ in Prop.~\ref{prop:covDegenerate} is free, 
and can be obtained by normalizing the columns of a $\level \times (\level-1)$ Helmert contrast matrix 
(\cite{Venable_Ripley_MASS}, \S 6.2.):
$$\begin{bmatrix}
-1 & -1 & -1 & \cdots & -1 \\ 
 1 & -1 & -1 & \cdots & -1 \\
 0 &  2 & -1 & \cdots & -1 \\
\vdots  &  0 &  3 & \ddots & \vdots \\
\vdots  &  \vdots  &  \ddots  & \ddots  &  -1  \\
 0 &  0 &  \cdots & 0 & \level-1 \\
\end{bmatrix} $$

\subsection{A hierarchical Gaussian model for GCS block covariance matrices} \label{sec:hierarchicalGP}
Let us now return to the general case, where the levels of $u$ are partitioned in $\nbGroup$ groups.
It will be convenient to use the hierarchical notation $g/\levelIndex$, indicating that $\levelIndex$ belongs to the group $\group_g$.
Then, we consider the following hierarchical Gaussian model:
\begin{equation} \label{eq:blockMatModel}
\responseAnova_{g/\levelIndex} = \groupEffect_g + \levelEffect_{g/\levelIndex}, \qquad g = 1, \dots, \nbGroup, \quad \levelIndex \in \group_g
\end{equation}
where for each $g$ the random variable $\groupEffect_g$ represent the \textit{effect of the group $g$},
and the random variables $\levelEffect_{g/1}, \dots, \levelEffect_{g/n_g}$ represent the \textit{effects of the levels} in this group.
We assume that $\bm{\groupEffect}$ is normal $\mathcal{N}(0, \varCentreGroup)$,
and all the $\bm{\levelEffect}_{g/.}$ are normal $\mathcal{N}(0, \varIndivG{g})$.
We also assume that $\bm{\levelEffect}_{1 / .}, \dots, \bm{\levelEffect}_{\nbGroup / .}$ are independent,
and independent of $\bm{\groupEffect}$.\\ 
Notice that, up to centering conditions on $\bm{\levelEffect}_{g/.}$ that will be considered next, 
$\groupEffect_g$ is the mean of group $g$.
Hence, 
$\varCentreGroup$ is interpreted as the \textit{between group means} covariance.
Similarly, $\bm{\levelEffect}_{g/.}$ is the within-group effect around the group mean. 
This justifies the notations $\varCentreGroup$ and $\varIndivG{g}$.\\

As an extension of Prop.~\ref{prop:CSparam}, the next results
show that (\ref{eq:blockMatModel}) 
gives a one-to-one parameterization of valid GCS block covariance matrices, 
under the additional assumption that the average of level effects is zero in each group.

\newtheorem{thm1}{Theorem}
\begin{thm1} \label{prop:blockParam}
The covariance matrix of $\bm{\responseAnova}$ conditional on 
$\{ \overline{\levelEffect_{g / .}} = 0, \, g=1, \dots, \nbGroup\}$
is a GCS block matrix with, for all $g, g' \in \{1, \dots, \nbGroup \}$: 
\begin{equation} \label{eq:blockCov}
\begin{split}
\mb{\within}_g &= \varCentreGroupCoef{g, g} \J{n_g} + \varIndivG{g}, \\
\between_{g, g'} &= \varCentreGroupCoef{g, g'} \J{n_g, n_{g'}}, 
\end{split}
\end{equation}
where $\varIndivG{g}$ is a centered positive semidefinite matrix equal to $\Cov( \bm{\levelEffect}_{g/.} \vert \overline{\levelEffect_{g / .}} = 0)$. 
Conversely, let $\mb{\matCovCat}$ be a positive semidefinite GCS block matrix.
Then there exists a representation~(\ref{eq:blockMatModel}) 
such that $\mb{\matCovCat}$ is the covariance of $\bm{\responseAnova}$ conditional on zero average errors 
$\overline{\levelEffect_{g/.}} = 0, (g = 1, \dots, \nbGroup)$, with:
\begin{eqnarray*} 
\varCentreGroup &=& \widetilde{\mb{\matCovCat}} , \\
\Cov( \bm{\levelEffect}_{g/.} \vert \overline{\levelEffect_{g / .}} = 0) &=& \mb{\within}_g - \overline{\within_g}  \J{n_g},
\end{eqnarray*}
where $\widetilde{\mb{\matCovCat}}$ is the $\nbGroup \times \nbGroup$ matrix obtained by averaging each block of $\mb{\matCovCat}$.
\end{thm1}

\newtheorem{prop4}{Corollary}
\begin{prop4} \label{prop:blockCSparam}
Positive semidefinite GCS block matrices with CS diagonal blocks 
exactly correspond to covariance matrices of $\bm{\responseAnova}$ in (\ref{eq:blockMatModel}) 
conditional on the $\nbGroup$ constraints $\overline{\levelEffect_{g/.}} =0$ 
when $\Cov(\bm{\levelEffect}_{g/.}) \propto \I{n_g}$.\\
\end{prop4}

As a by-product, we obtain a simple condition for checking the positive definiteness of GCS block matrices.
Interestingly, it only involves a small matrix whose size is the number of groups.

\newtheorem{thm2}[thm1]{Theorem}
\begin{thm2} \label{prop:TandTbar}
Let $\mb{\matCovCat}$ be a GCS block matrix. Then
\begin{enumerate}
\item[(i)] $\mb{\matCovCat}$ is positive semidefinite if and only if $\widetilde{\mb{\matCovCat}}$ is positive semidefinite.
\item[(ii)] $\mb{\matCovCat}$ is positive definite if and only if $\widetilde{\mb{\matCovCat}}$ is positive definite and the diagonal blocks $\mb{\within}_g$ are positive definite for all $g=1, \dots, \nbGroup$.
\end{enumerate} 
Furthermore, we have
\begin{equation} \label{eq:TandTbarEq}
\mb{\matCovCat} = \mb{X} \widetilde{\mb{\matCovCat}} \mb{X}^\top + 
\textrm{diag}(\mb{\within}_1 - \overline{\within_1} \J{n_1}, \dots, 
\mb{\within}_\nbGroup - \overline{\within_\nbGroup} \J{n_\nbGroup})
\end{equation}
where $\mb{X}$ is the $n \times \nbGroup$ matrix
$$ \mb{X} := 
\begin{pmatrix} 
  \one{n_1} & 0 & \dots & 0 \\
  0 & \one{n_2} & \ddots & \vdots \\
  \vdots & \ddots & \ddots & 0\\
  0 & \dots & 0 & \one{n_\nbGroup}\\
\end{pmatrix}.$$
\end{thm2}

\newtheorem{rem1}[remIdentify]{Remark}
\begin{rem1}
All the results depend on the conditional distribution
$\bm{\levelEffect}_{g/.} \vert { \overline{\levelEffect_{g / .}} = 0}$. 
Thus there is some flexibility in the choice of $\varIndivG{g}$,
since several matrices $\varIndivG{g}$ can lead to the same conditional covariance matrix
$\Cov(\bm{\levelEffect}_{g / .} \vert { \overline{\levelEffect_{g / .}} = 0})$.
\end{rem1}

\newtheorem{rem2}[remIdentify]{Remark}
\begin{rem2}[Groups of size 1]
Theorem~\ref{prop:blockParam} is still valid for groups of size 1.
Indeed if $n_g=1$, then $(\bm{\levelEffect}_{g / .} \vert \overline{\levelEffect_{g / .}} = 0)$ is degenerate and equal to 0.
Thus $\varIndivG{g} = \mb{\within}_g - \overline{\within_g}\J{1} = 0$ is positive semidefinite.\\
\end{rem2}

We end this section with an ``exclusion property'' for groups with strong negative correlation.
A positive semidefinite GCS covariance matrix can exhibit negative within-group correlations,
but this induces limitations on the between-group correlations.
More precisely, the following result shows that if a group 
has the strongest possible negative within-group correlations,
then it must be independent of the others.

\newtheorem{propExclusion}[prop1]{Proposition} 
\begin{propExclusion}[Exclusion property for groups with minimal correlation]
\label{prop:exclusionProperty}
Let $\mb{\matCovCat}$ a GCS covariance matrix, 
and let $\mb{y}$ be a centered Gaussian vector such that $\Cov(\mb{y})=\mb{\matCovCat}$.
Let $g$ be a group number, and denote by $\mb{y}_g$ (resp. $\mb{y}_{-g}$) 
the subvector extracted from $\mb{y}$ whose coordinates are (resp. are not) in group $\group_g$.
Assume that $\mb{\within}_g$ is such that $\overline{\within_g} = 0$.
Then $\mb{y}_g$ is independent of $\mb{y}_{-g}$.
\end{propExclusion}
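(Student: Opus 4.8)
The plan is to reduce the statement to a covariance computation. Since $(\mb{y}_g, \mb{y}_{-g})$ is jointly Gaussian, independence is equivalent to $\Cov(\mb{y}_g, \mb{y}_{-g}) = 0$. The cross-group covariance is carried by the off-diagonal blocks $\between_{g,g'} = c_{g,g'}\J{n_g,n_{g'}}$ with $g'\neq g$, so it suffices to prove that $c_{g,g'}=0$ for every $g'\neq g$.

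To obtain this I would work with the reduced $\nbGroup\times\nbGroup$ matrix $\widetilde{\mb{\matCovCat}}$ formed by averaging each block. Its diagonal entries are the block averages $\overline{\within_h}$, and its off-diagonal entries are precisely the between-group covariances $c_{h,h'}$. Because $\mb{\matCovCat}$ is a covariance matrix, hence positive semidefinite, Theorem~\ref{prop:TandTbar}(i) ensures that $\widetilde{\mb{\matCovCat}}$ is positive semidefinite as well. The hypothesis $\overline{\within_g}=0$ says exactly that the $g$-th diagonal entry of $\widetilde{\mb{\matCovCat}}$ vanishes.

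The key step is then the elementary fact that a positive semidefinite matrix with a zero diagonal entry has the whole corresponding row and column equal to zero. Concretely, for any $g'\neq g$ the principal submatrix of $\widetilde{\mb{\matCovCat}}$ indexed by $\{g,g'\}$ must have nonnegative determinant $\overline{\within_g}\,\overline{\within_{g'}} - c_{g,g'}^2$; with $\overline{\within_g}=0$ this reads $-c_{g,g'}^2 \geq 0$, forcing $c_{g,g'}=0$. Hence all between-group blocks attached to $g$ vanish, $\Cov(\mb{y}_g, \mb{y}_{-g})=0$, and independence follows.

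The same conclusion can be read off directly from the decomposition (\ref{eq:TandTbarEq}): the block-diagonal term contributes nothing across distinct groups, while the $(g,g')$ block of $\mb{X}\widetilde{\mb{\matCovCat}}\mb{X}^\top$ equals $\widetilde{\matCovCat}_{g,g'}\,\J{n_g,n_{g'}}$, so the cross-covariance is governed entirely by row $g$ of $\widetilde{\mb{\matCovCat}}$, which we have just shown to be zero off the diagonal. No serious obstacle is expected here: the only real content is the transfer of positive semidefiniteness to the small averaged matrix through Theorem~\ref{prop:TandTbar}, combined with the zero-diagonal lemma; the rest is bookkeeping about block averages.
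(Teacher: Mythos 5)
Your proposal is correct and follows essentially the same route as the paper: invoke Theorem~\ref{prop:TandTbar}(i) to transfer positive semidefiniteness to the averaged matrix $\widetilde{\mb{\matCovCat}}$, use the zero-diagonal-entry-forces-zero-row fact to kill the between-group covariances attached to $g$, and conclude by Gaussianity. Your explicit $2\times 2$ principal minor argument simply spells out a step the paper states without proof; there is no substantive difference.
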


The condition $\overline{\within_g} = 0$ is linked to minimal correlations. 
Indeed, since $\mb{\within}_g$ is positive semidefinite, $\overline{\within_g} \geq 0$.
The limit case $\overline{\within_g} = 0$ is obtained when negative 
terms of $\mb{\within}_g$ are large enough to compensate positive ones.
As an example, if $\mb{\within}_g$ is a positive semidefinite CS covariance matrix 
with variance $v_g$ and minimal negative covariance $c_g = - (n_g - 1)^{-1}v_g$,
then $\overline{\within_g} = 0$. 

\subsection{Related works} \label{sec:relatedWorks}
The hierarchical model~(\ref{eq:blockMatModel}) 
shares similarities with two-way Bayesian models 
and linear mixed effect models (see e.g. \citet{Lindley_Smith_1972}), 
with Gaussian priors for the effects $\bm{\groupEffect}$ and $\bm{\levelEffect}_{g/.}$.
The centering constraints $\overline{\levelEffect_{g / .}} = 0$ are also standard identifiability conditions in such models.
Furthermore, the particular case of CS covariance matrices 
corresponds to the exchangeable assumption of the corresponding random variables.
Typically, in the framework of linear modelling, Model~(\ref{eq:blockMatModel}) could be written as
$$ y_{g,\ell} = m + \groupEffect_g + \levelEffect_{g, \levelIndex} + \varepsilon_{g, \levelIndex},$$
with additionals grand mean $m$ and errors $\varepsilon_{g, \levelIndex}$.\\

However, if the framework is similar, the goal is different.
In linear modelling, the aim is to quantify the effects by estimating their posterior distribution
$(\bm{\groupEffect}, \bm{\levelEffect}) \vert \mb{y}$.
On the other hand, we aim at investigating the form of the covariance matrix of the response part
$ \groupEffect_g + \levelEffect_{g/\levelIndex}$, 
or, equivalently, the covariance matrix of the \emph{likelihood} 
$\mb{y} \vert \bm{\groupEffect_g } + \bm{\levelEffect_{g/\levelIndex}}.$

\subsection{Guideline for practical usage} \label{sec:summary}
The results of the previous sections show that valid GCS block covariance matrices
can be parameterized by a family of covariance matrices of smaller sizes.
It contains the case where diagonal blocks are CS covariance matrices. 
The algorithm is summarized below.
\begin{enumerate}
\item Generate a covariance matrix $\varCentreGroup$ of size \nbGroup.
\item For all $g=1, \dots, \nbGroup$,\\
If $n_g = 1$, set $\varIndivG{g} = 0$, else:
\begin{itemize}
\item Generate a covariance matrix $\varZeroAverageRed{g}$ of size $n_g-1$. 
\item Compute a centered matrix $\varIndivG{g} = \constrastMat{g} \varZeroAverageRed{g} \constrastMat{g}^\top$, 
where $\constrastMat{g}$ is a $n_g$ by $n_g-1$ matrix whose columns form an orthonormal basis of $\one{n_g}^\perp$. 
\end{itemize} 
\item For all $1 \leq g <  g' \leq \nbGroup$, compute the within-group blocks $\mb{\within}_g$ and between-group blocks $\between_{g,g'}$
by Eq. (\ref{eq:blockCov}).
\end{enumerate} 

In steps 1 and 2, the generator covariance matrices $\varCentreGroup$ and $\varZeroAverageRed{g}$ can be general, 
and obtained by one of the parameterizations of \S \ref{sec:parametricCovMat}.
A direct application of Theorem~\ref{prop:TandTbar} also shows that $\mb{\matCovCat}$ is invertible 
if and only if $\varCentreGroup$ and the $\varZeroAverageRed{g}$'s are invertible (cf. Appendix for details).\\
Furthermore, some specific form, such as CS matrices, can be chosen.
Depending on the number of groups and their sizes, different levels of parsimony can be obtained.
Table~\ref{tab:paramSetting} summarizes some possibilities.\\

Note that the parameterization is for a general covariance matrix,
but not for additional constraint, as for a correlation matrix.
In these situations, one can take advantage of the economic condition  
of Theorem~\ref{prop:TandTbar}:
positive semidefiniteness on $\mb{\matCovCat}$ is always equivalent to 
positive semidefiniteness of the small matrix $\widetilde{\mb{\matCovCat}}$ of size $\nbGroup$. 
This can be handled more easily by non-linear or semidefinite programming algorithms.\\

\begin{table}[h!] 
\begin{center}
\small{
\begin{tabular}{c c | c c | c} 
\hline 
\multicolumn{2}{c |}{\it{Parametric setting}} & \multicolumn{2}{c |}{\it{Resulting form of $\mb{\matCovCat}$}} & \it{Number of parameters} \\ 
$\varZeroAverageRed{g}$ & $\varCentreGroup$ & $\mb{\within}_g$ & $\between_{g,g'}$  &\\ \hline 
$v_{\levelEffect_g} \I{n_g - 1}$ & $\compound{}{v_\groupEffect, c_\groupEffect}$  & $\compound{}{v_g, c_g}$ & $c_{g, g'} \equiv c_\groupEffect$ & $2 \nbGroup +1$ \\ 
$v_{\levelEffect_g} \I{n_g - 1}$ & General & $\compound{}{v_g, c_g}$ & $c_{g, g'}$ & $\frac{\nbGroup(\nbGroup+3)}{2}$ \\ \hline
General & $\compound{}{v_\groupEffect, c_\groupEffect}$ & General & $c_{g, g'} \equiv c_\groupEffect$ & $ 2 + \sum_{g = 1}^\nbGroup \frac{n_g (n_g + 1)}{2}$ \\
General & General & General & $c_{g, g'}$ & $ \frac{\nbGroup(\nbGroup+1)}{2} + \sum_{g = 1}^\nbGroup \frac{n_g (n_g + 1)}{2}$ \\ \hline
\end{tabular}}
\caption{Parameterization details for some valid GCS block covariance matrices $\mb{\matCovCat}$.}
\label{tab:paramSetting}
\end{center}
\end{table}

\section{Example} \label{sec:examples}

\subsection*{Example 1}
Consider the deterministic function 
\begin{equation*}
f(x, u) = \cos \left( 7 \pi \frac{x}{2} + p (u) \pi - \frac{u}{20} \right) 
\end{equation*}
with $x \in [0, 1]$, $u \in \lbrace 1, \dots, 13 \rbrace$ and $p (u) = \left(0.4+\frac{u}{15} \right)  \mathbbm{1}_{u>9}$. 
\begin{figure}[h!]
\begin{center}
\includegraphics[width=0.8\textwidth]{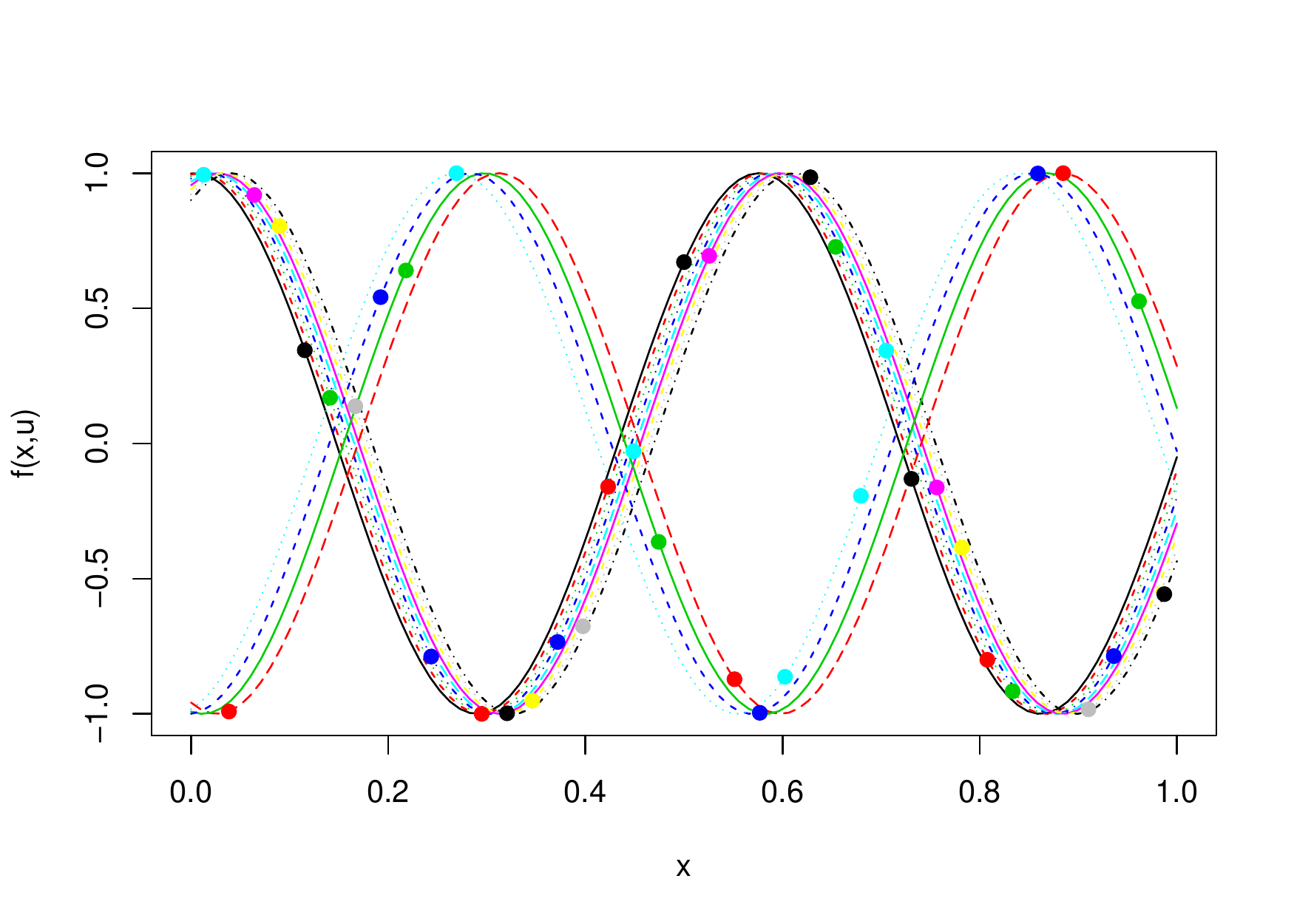}  
\caption{Test function of Example 1. Bullets represent design points.}
\label{toy2}
\end{center}
\end{figure}
As visible in Figure~\ref{toy2}, there are two groups of curves corresponding to levels $\{1, \dots, 9 \}$
and $\{10, \dots, 13 \}$ with strong within-group correlations, and strong negative between-group correlations.\\

\begin{figure}[h] 
\captionsetup[subfigure]{justification=centering}
\centering
\begin{subfigure}{0.32\textwidth}
\centering
\includegraphics[trim = 0cm 2cm 0cm 0cm, width=\textwidth]{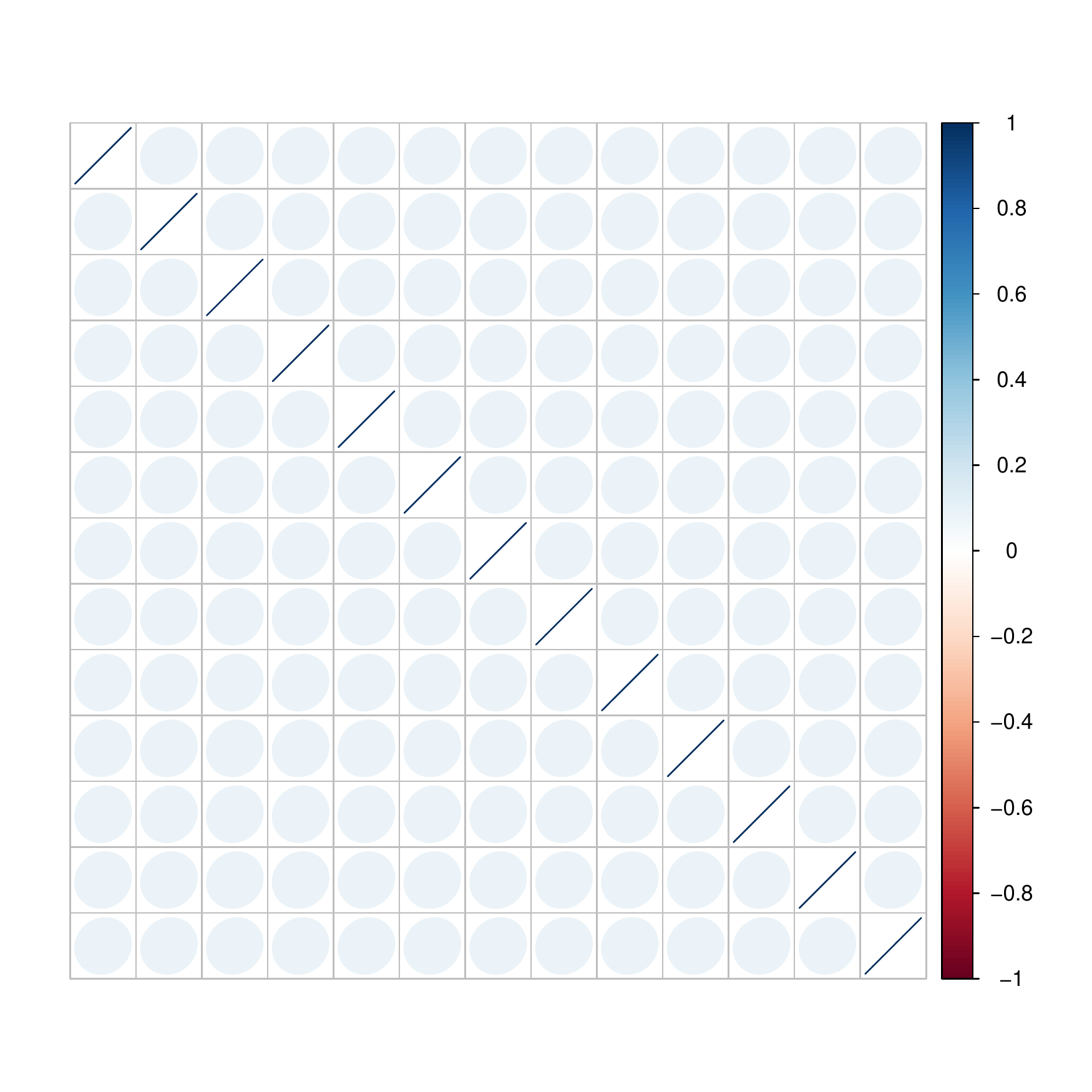} 
\caption*{1 group}
\end{subfigure} \hfill
\begin{subfigure}{0.32\textwidth}
\centering 
\includegraphics[trim = 0cm 2cm 0cm 0cm, width=\textwidth]{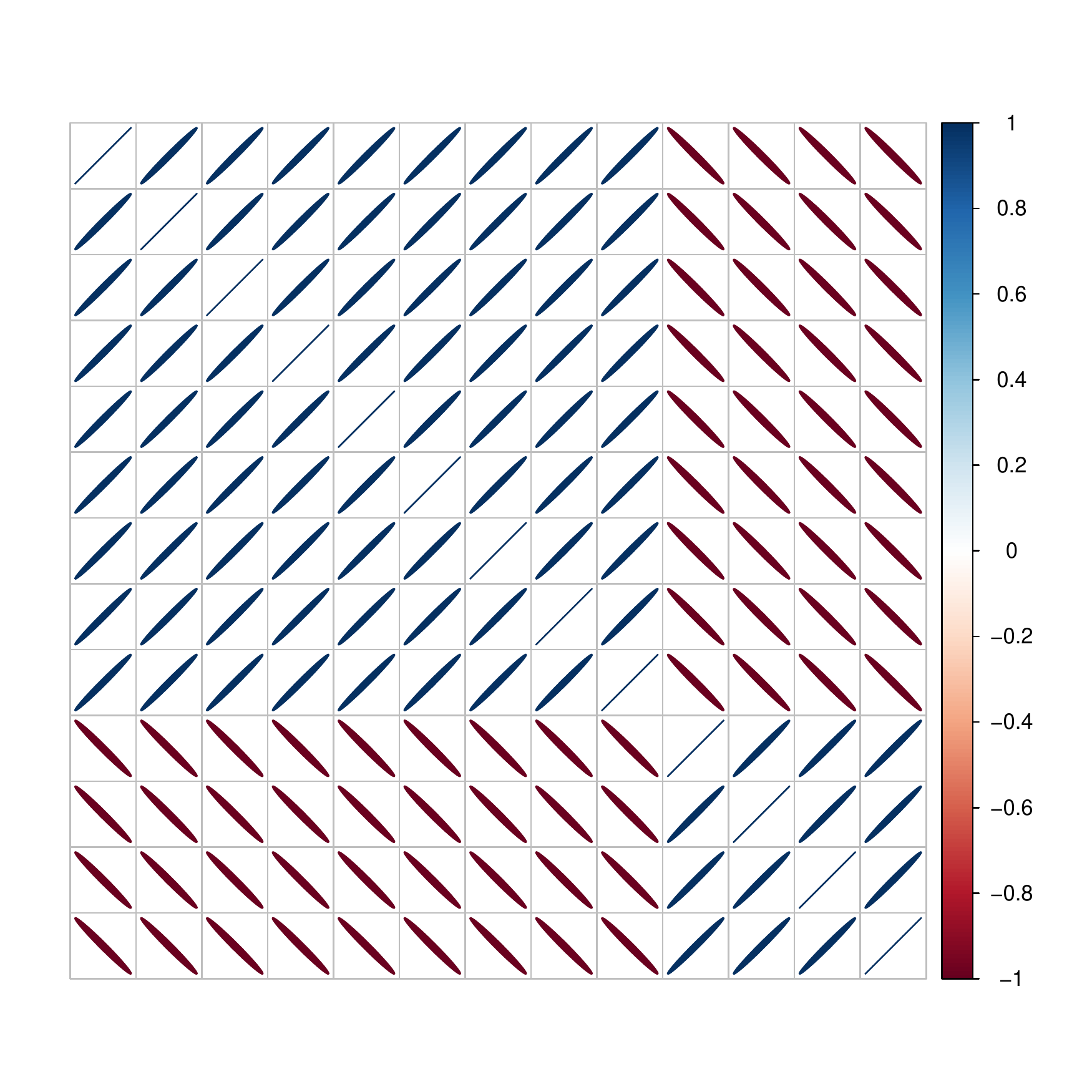}
\caption*{2 groups}
\end{subfigure} \hfill
\begin{subfigure}{0.32\textwidth}
\centering 
\includegraphics[trim = 0cm 2cm 0cm 0cm, width=\textwidth]{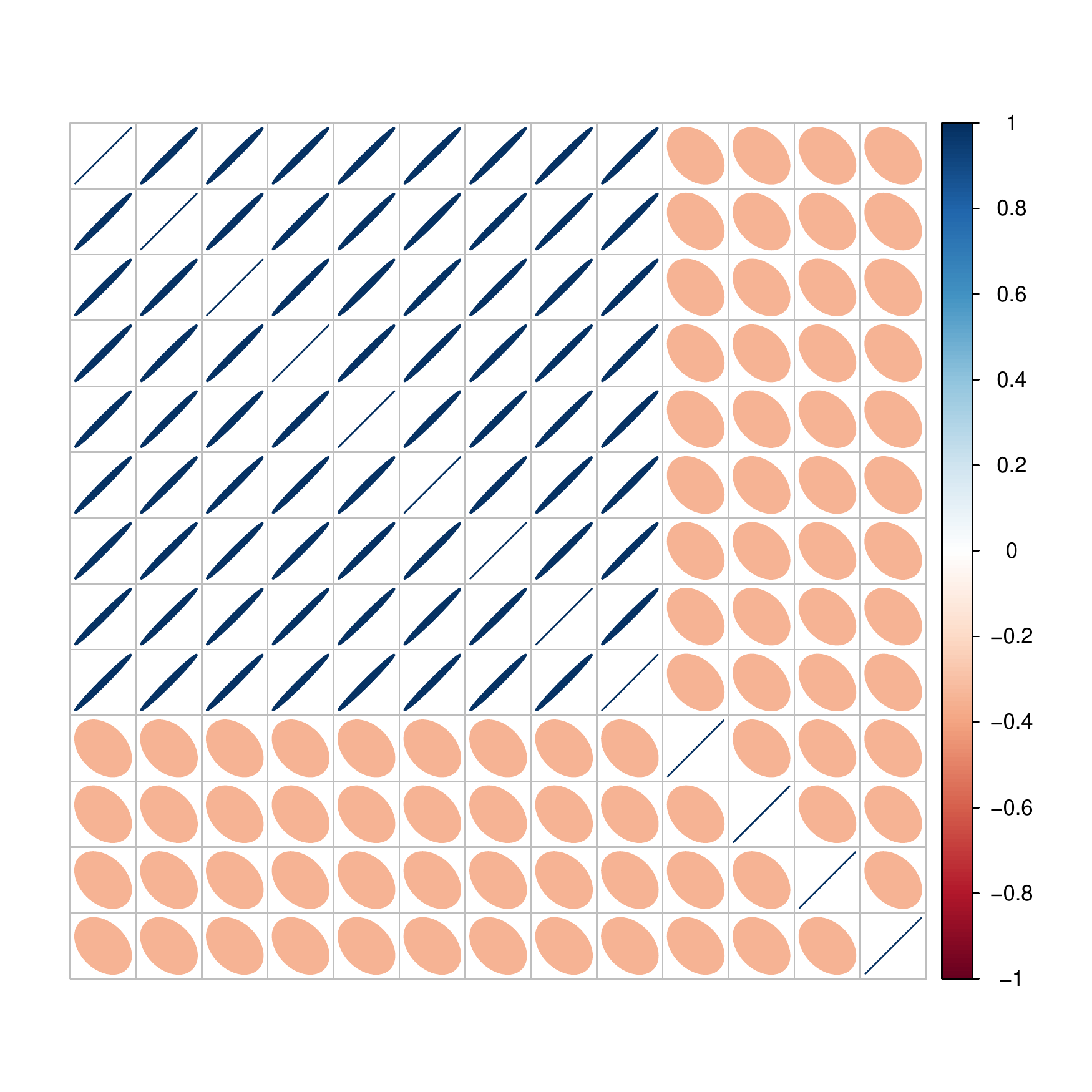} 
\caption*{5 groups (a)}
\end{subfigure}

\begin{subfigure}{0.32\textwidth}
\centering 
\includegraphics[trim = 0cm 2cm 0cm 0cm, width=\textwidth]{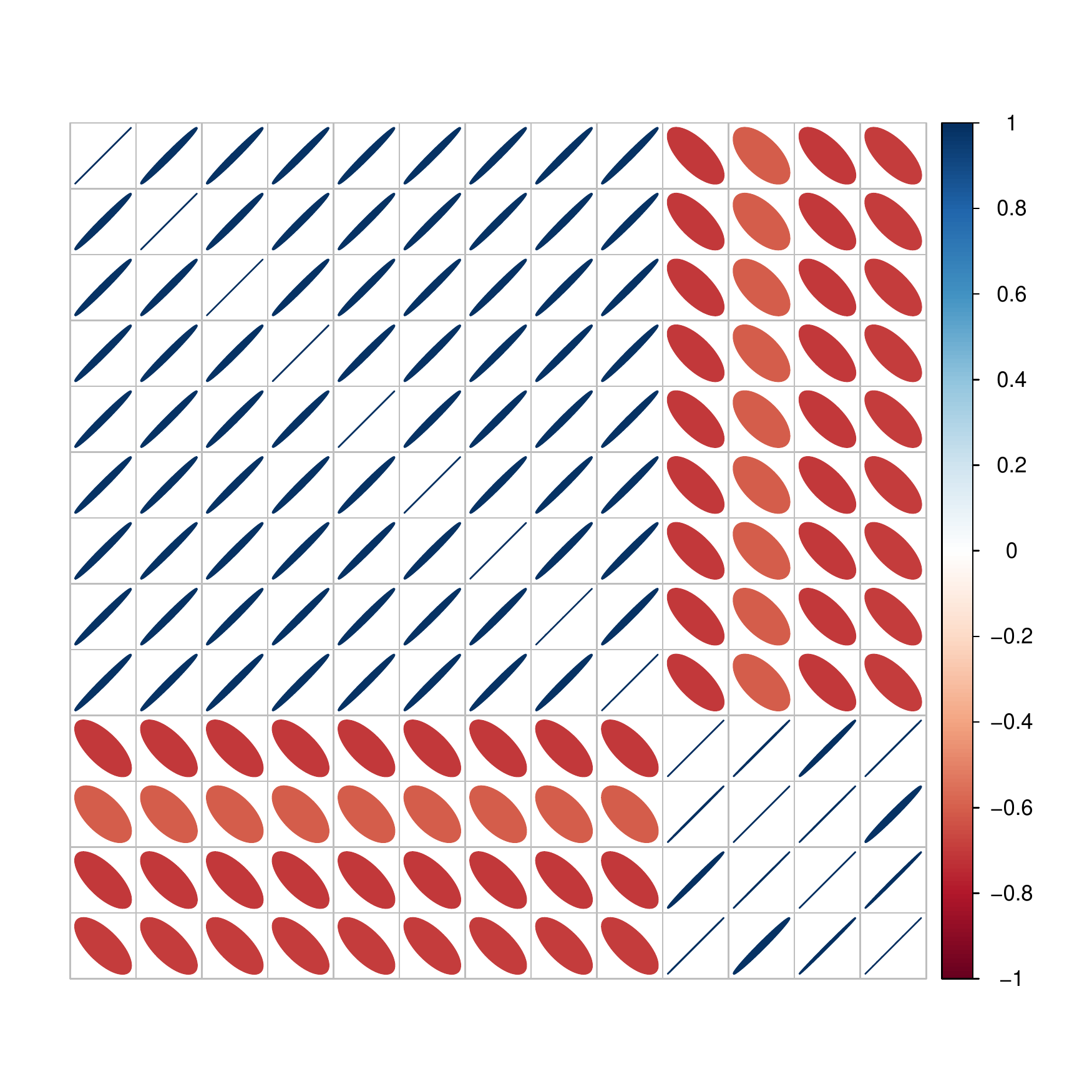} 
\caption*{5 groups (b)}
\end{subfigure} \hfill
\begin{subfigure}{0.32\textwidth}
\centering 
\includegraphics[trim = 0cm 2cm 0cm 0cm, width=\textwidth]{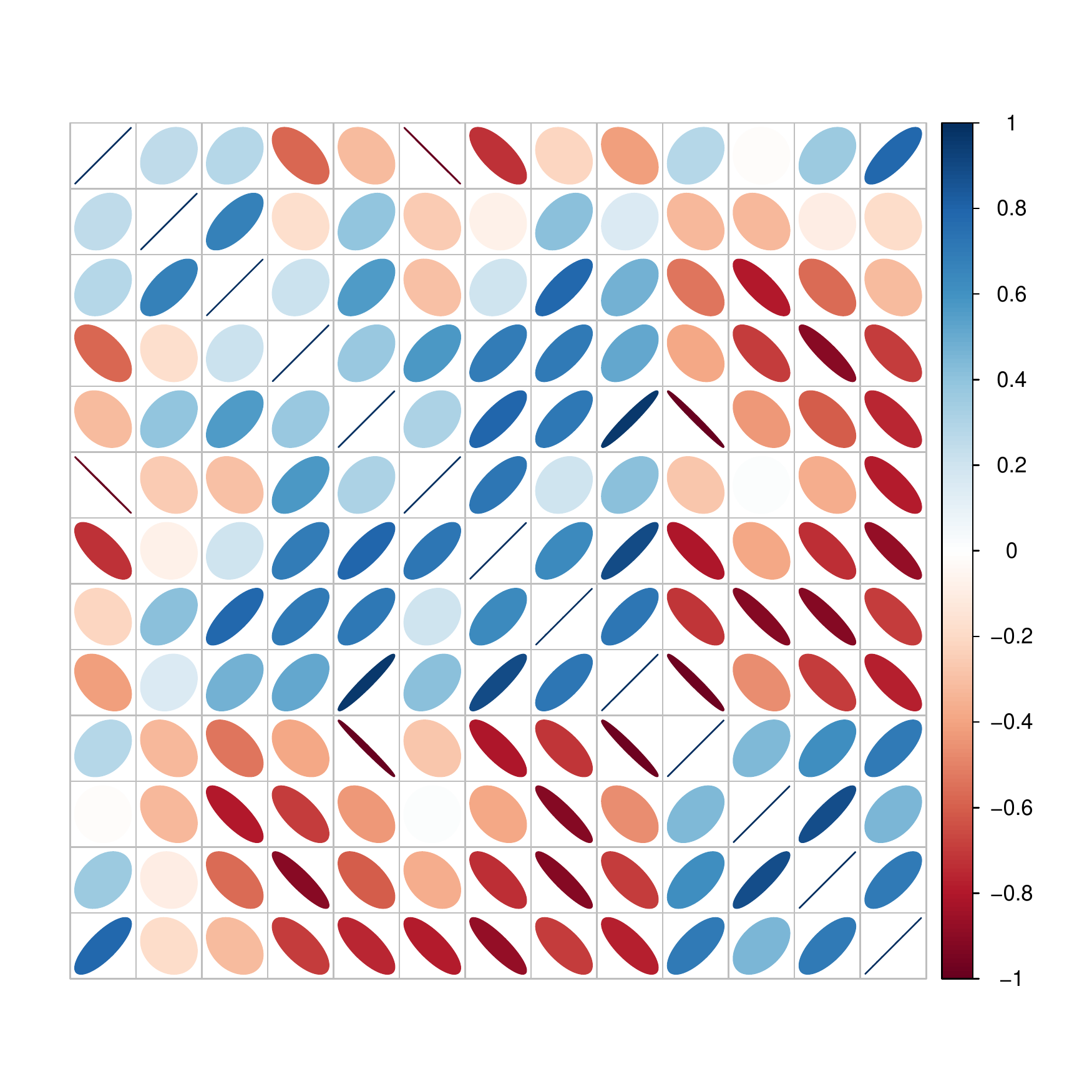}
\caption*{13 groups}
\end{subfigure} \hfill
\begin{subfigure}{0.32\textwidth}
\centering 
\includegraphics[trim = 0cm 2cm 0cm 0cm, width=\textwidth]{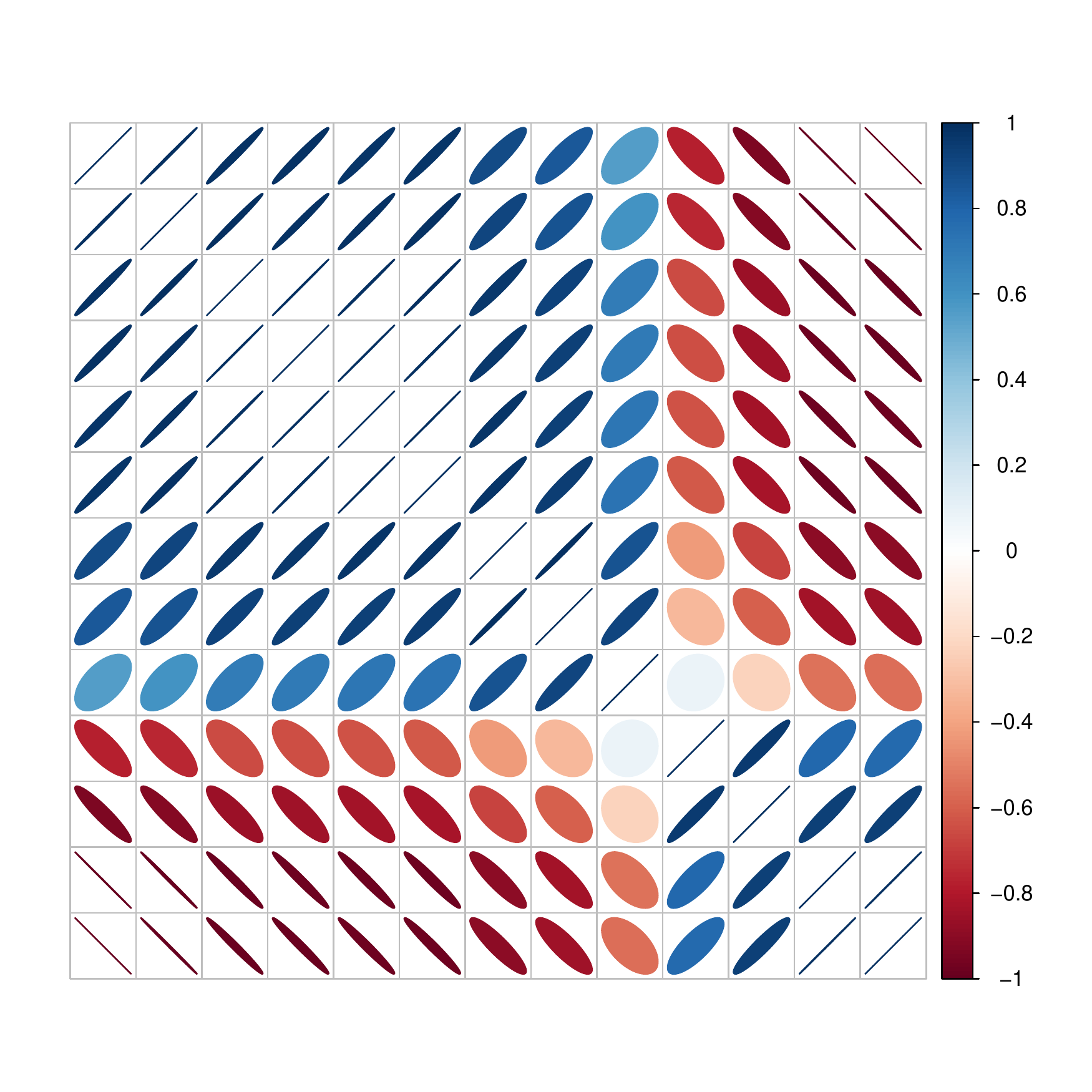} 
\caption*{Ordinal}
\end{subfigure}
\caption{Estimated correlation kernel $k_{\textrm{cat}}$, based on a representative design of experiments (design with median $Q^2$).}
\label{corr_plot_g2}
\end{figure}

We aim at reconstructing $g$ with five GP models based on levels grouping.
The first one uses a CS covariance matrix, corresponding to a single group. 
The second one considers the two groups $\{1, \dots, 9 \}$ and $\{10, \dots , 13\}$. 
The third model, based on the five groups $\{1, \dots, 9 \}$, $\{10\}$, $\{11\}$, $\{12\}$, $\{13\}$, has two variants: (a) when the inter-groups correlation is constant and (b) in the general case.
The fourth model uses the spherical parameterization of $\mb{\matCovCat}$, leading to 13 groups, and the last one considers an ordinal paramaterization for $\mb{\matCovCat}$.
We also compare the result with an ordinal kernel obtained by mapping a piecewise linear cdf $F$ 
into the cosine kernel $k_Z$ of Eq.~\ref{eq:cosKernel} with $\alpha = \pi$.
(see Section~\ref{sec:ordinalKernel}). 
Notice that the choice of $k_Z$ is motivated by recovering negative correlations,
and has no link with the sinusoidal form of the curves of the example.\\

In order to benefit from the strong link between levels, 
we use a design that spreads out the points between levels.
For instance, the information given by $g(0,1)$ may be useful to estimate $g(x,\ell)$ 
at $0$ for a different level $\ell \geq 2$, without computing $g(0,\ell)$. 
More precisely, we have used a (random) sliced Latin hypercube design (SLHD) \citep{qian2012} 
with $3$ points by level, for a total budget of $39$ points.\\
All parameters are estimated by maximum likelihood. 
As the likelihood surface may be multimodal, 
we have launched several optimizations with different starting points chosen at random in the domain.\\
Model accuracy is measured over a test set formed by a regular grid of size $1000$, in terms of $Q^2$ criterion.
The $Q^2$ criterion has a similar expression than $R^2$, but is computed on the test set:
\begin{equation} \label{Q2}
Q^2 = 1 - \frac{\sum_{i}(y_i - \hat{y}_i)^2}{\sum_{i}(y_i - \bar{y})^2},
\end{equation}
where the $y_i$ denote the observations (on the test set), $\bar{y}$ their mean, $\hat{y}_i$ the predictions. 
It is negative if the model performs worst than the mean, 
positive otherwise, and tends to 1 when predictions are close to true values.
Finally, the process is repeated $100$ times, in order to assess the sensitivity of the result to the design.

\paragraph{Results.}
The estimated correlation parameters are shown in Figure~\ref{corr_plot_g2}.
Information about the plots used is given in a last section.
The correlation structure that can be intuited from Figure~\ref{toy2}
is well recovered with two groups and five groups, 
with different between-groups correlations.
The model with thirteen groups involves the estimation of $90$ parameters, which is hard to achieve, especially with $39$ points. 
This is visible in the erratic values of the estimated correlations values, which seem not meaningful.
On the opposite, considering only one group or five groups with a common between-group correlation 
oversimplifies the correlation structure.
Finally, the model using an ordinal kernel recovers the two groups of curves, 
as well as the strong negative correlation between them, 
which is made possible by the choice of the 1-dimensional kernel used in the warping.\\

In Figure~\ref{rmse2}, we can see that the best tradeoff between prediction accuracy 
and parsimony is obtained with two groups. 
whereas it reduces the number of observations by group. 
Notice the rather good performance of the ordinal model, at the cost of a larger number of parameters.

\begin{figure}[h] 
\begin{center}
\includegraphics[width=0.85\textwidth]{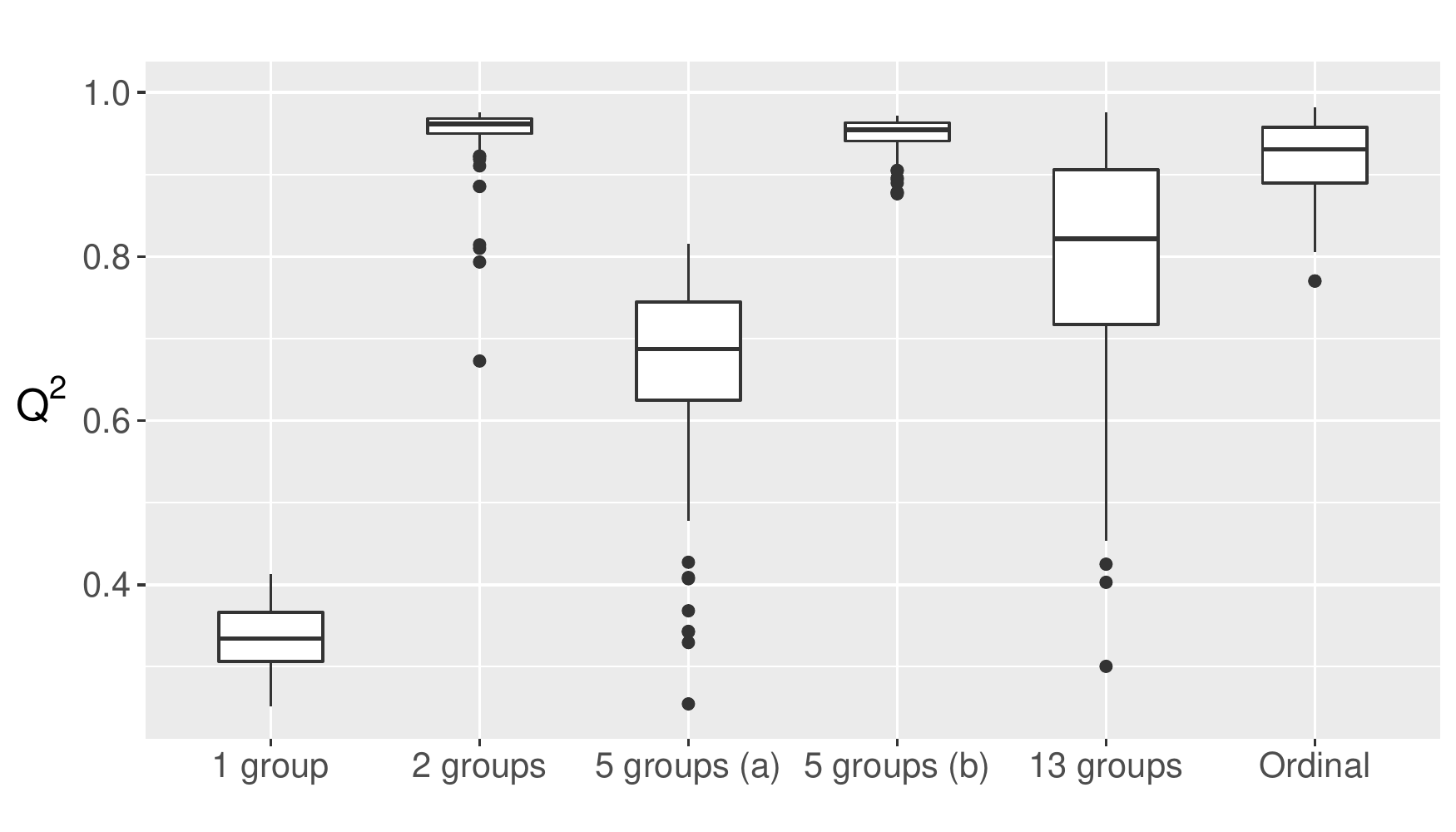} 
\caption{$Q^2$ of six GP models, based on $100$ repetitions of the design.
From left to right: 1 group (CS structure), 2 groups, 5 groups (a : common between-group covariance), 
5 groups (b : general), 13 groups, ordinal.
Number of parameters used (bloxplot order): 5, 7, 10, 19, 90, 16.}
\label{rmse2}
\end{center}
\end{figure} 

\subsection*{Example 2}
We now provide a second example, in order to illustrate the ability of the hierarchical model~(\ref{eq:blockMatModel})
to deal with negative within-group correlations. 
We consider the deterministic function given by:
$$ f(x, u) = \begin{cases} 
   (x + 0.01 (x - 1/2) ^ 2) \times u / 10 & \text{if } u = 1, 2, 3, 4 \\
   0.9 \cos(2 \pi (x + (u - 4) / 20 )) \times \exp(-x)  & \text{if } u = 5, 6, 7 \\
   -0.7 \cos(2 \pi (x + (u - 7) / 20 )) \times \exp(-x)  & \text{if }    u = 8, 9, 10    
  \end{cases}
$$
with $x \in [0, 1]$, $u \in \lbrace 1, \dots, 10 \rbrace$. 
As visible in Figure~\ref{fig:toy_negCor}, the levels can be split in two groups: 
a group of almost linear functions (levels $1-4$), and a group of damped sinusoidal functions (levels $5-10$).
Within the latter group, there are strong negative correlations between levels $5 - 7$ and $8 - 10$.
Hence, the levels could also be split into three groups.\\

\begin{figure}[h!]
\begin{center}
\includegraphics[width=0.8\textwidth]{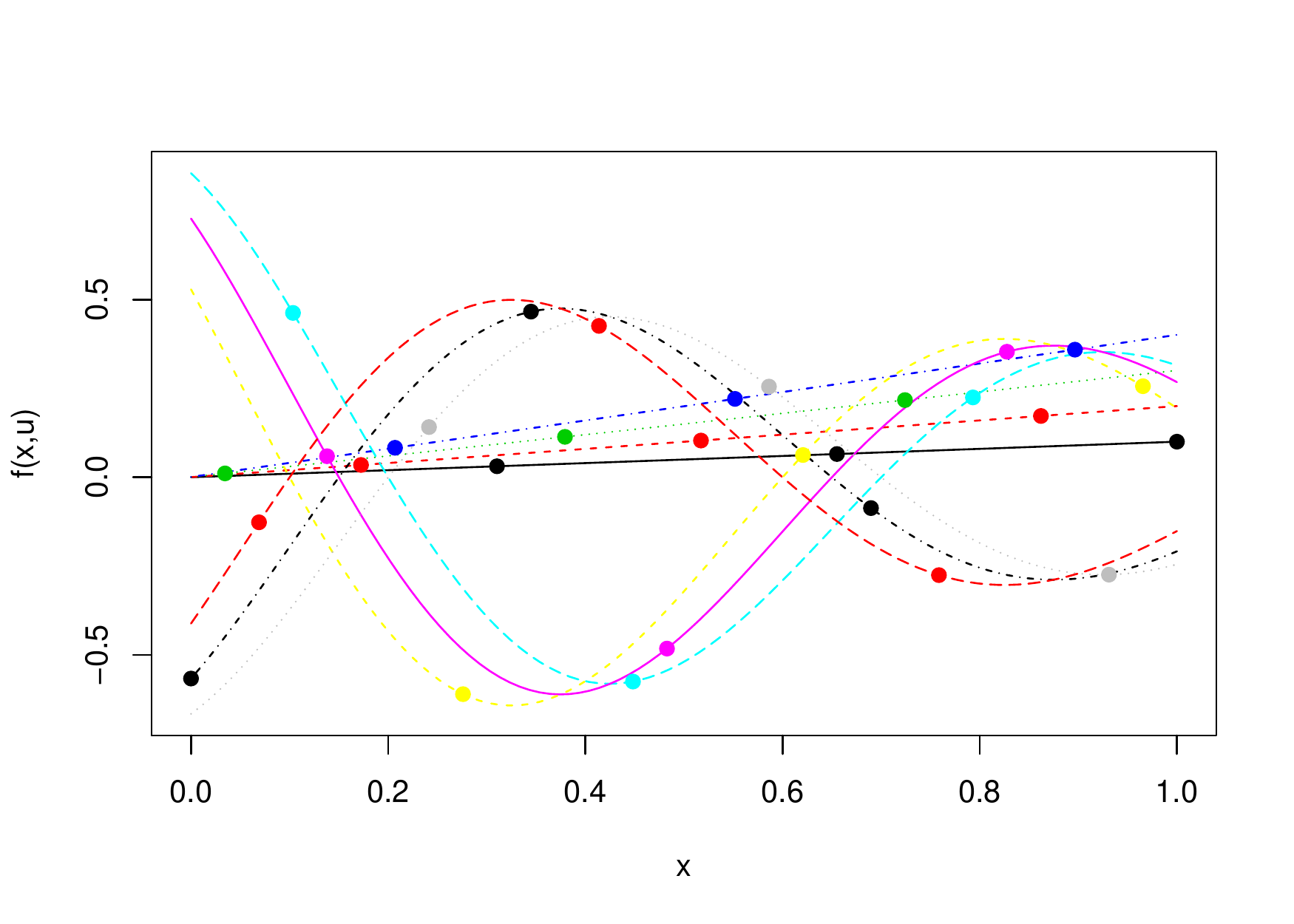}  
\caption{Test function of Example 2. Bullets represent design points.}
\label{fig:toy_negCor}
\end{center}
\end{figure}

In this section, we briefly compare the corresponding GP models:
\begin{itemize}
\item The first model considers the two groups $\{1, \dots, 4 \}$ and $\{5, \dots , 10\}$. 
The within-group structure is CS for the first group (linear functions). 
But a general structure is chosen for the second one (sinusoids), 
in order to capture its complex covariance structure.
\item The second model is based on the three groups $\{1, \dots, 4\}$, $\{5, \dots , 7\}$ and $\{8, \dots , 10\}$.
The within-group structure is CS, and the between-group covariance is general.
\end{itemize}
For simplicity, we consider a single stratified design of experiments 
extracted from a sequence of regularly spaced points, with $m = 3$ points per level.
The other settings are the same as in Example 1.\\

The estimated correlation parameters are shown in Figure~\ref{fig:corplot_exNegCor}.
The correlation structure that can be intuited from Figure~\ref{fig:toy_negCor}
is well recovered by the two models.
However, in the case of two groups, the estimated between-group correlation is nearly zero.
This is an illustration of the exclusion property (Prop.~\ref{prop:exclusionProperty}).
Indeed, due to the strong negative (estimated) correlations within the second group, 
we have $\overline{\within_2} \approx 0$, 
which induces a small correlation between the other group.
In this example, the model with three groups may be more appropriate, 
which seems confirmed by the larger $Q^2$ value of $0.94$ 
(compared to $0.88$ for two groups).
Nevertheless, it is nice to see that, starting from a smaller number of groups,
the correlation plot detects the two subgroups of sinusoids.

\begin{figure}[h] 
\captionsetup[subfigure]{justification=centering}
\centering
\begin{subfigure}{0.35\textwidth}
\centering
\includegraphics[trim = 0cm 1cm 0cm 0cm, width=\textwidth]{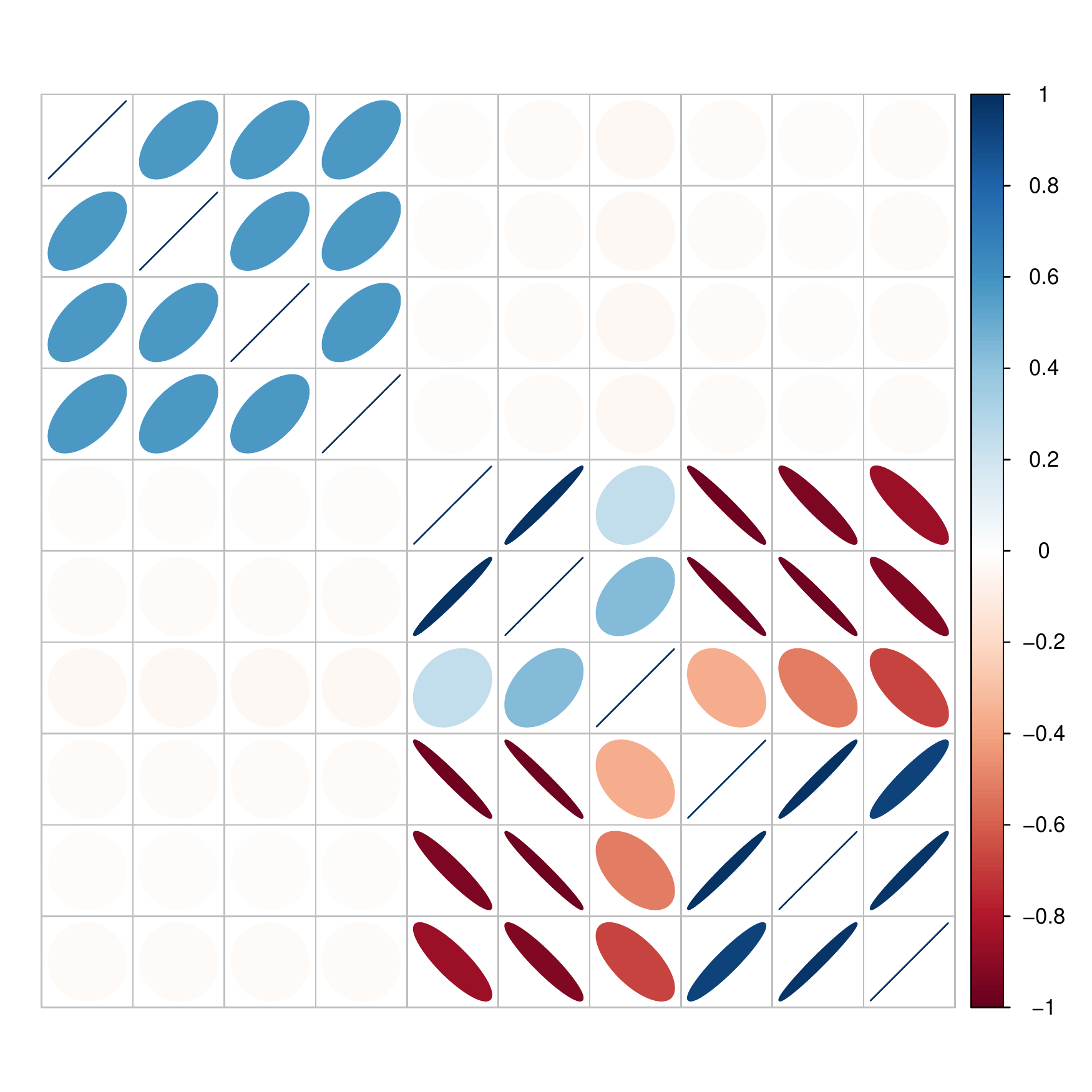} 
\caption*{2 groups}
\end{subfigure} 
\hspace{1cm}
\begin{subfigure}{0.35\textwidth}
\centering 
\includegraphics[trim = 0cm 1cm 0cm 0cm, width=\textwidth]{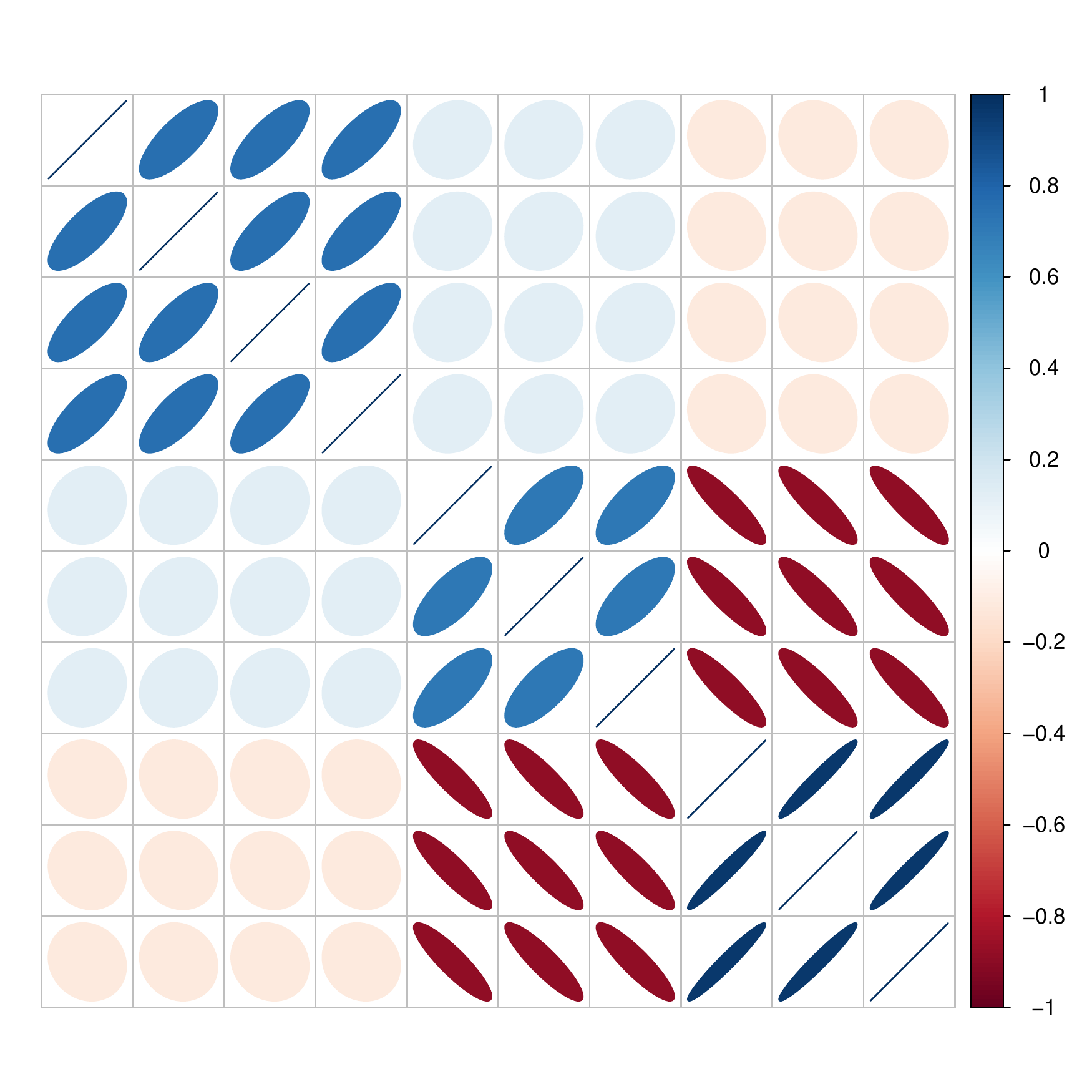}
\caption*{3 groups}
\end{subfigure} 
\caption{Estimated correlation kernel $k_{\textrm{cat}}$ for the two GP models.}
\label{fig:corplot_exNegCor}
\end{figure}

\subsection*{Discussion}
It is remarkable that complex correlation structures can be recovered with few data points per level.
Indeed, even if the correlation structure can be intuited from the whole curves $f(x, u)$,
this complete information is of course not available, 
and the estimation is done with only three points per level in the previous examples. 
The reasons may be twofold.
Firstly, the model is parametric, which may provide more accurate estimation for few data, 
provided that the model is relevant (in particular if the correct groups are given).
Secondly, the global amount of information available may be large enough,
since the small number of points per levels is compensated by the quite large number of levels.
Consequently, when there are few levels, one may need to increase the number of points per level 
in order to reach an acceptable level of accuracy.\\

The sinusoidal form of the toy functions $x \mapsto f(x, u)$ has been chosen 
to illustrate the power of GP models in modelling complex functions. 
Other complex forms could have been used.
An alternative would have been to use sample paths of GP models with group kernels rather than deterministic functions.
Finally, notice that correlation plots may be used with care in general, 
since they do not account for estimation error on correlations.

\section{Application in nuclear engineering} \label{sec:application}

\subsection{Position of the problem}

\newcommand{\g}[1]{\boldsymbol{#1}}
\newcommand{\bleu}[1]{\color{blue}#1\color{black}}

As presented in Introduction, this research is originally motivated by the solving of an inverse problem confronting experimental measurements in nuclear engineering and time-consuming numerical simulation. 
More precisely, this analysis concerns the identification of the mass $m$ of $^{239}\textrm{Pu}$ that is present in a particular waste container using a non-destructive nuclear detection technique such as the gamma spectrometry \citep{knoll}. In that case, at each energy level $E$,

\begin{equation}
m \times \epsilon(E;\mathcal{E}) = y^{\text{SG}}(E), \label{eq1} \end{equation}

\noindent{}where $y^{\text{SG}}(E)$ is the quantity of interest provided by the gamma transmitter, 
and $\epsilon(E;\mathcal{E})$ is the attenuation coefficient, which depends on the source environment denoted by $\mathcal{E}$. In practice, only discrete values of $E$ are of interest,
corresponding to the natural energy levels of $^{239}\textrm{Pu}$:

\begin{equation}
E \ \in \ \{94.66,129.3,203.6,345.0,375.1,413.7\} \ \text{(keV)}.
\end{equation}

Then, based on previous studies \citep{guillot}, the real source environment 
is parameterized by the following input variables:

\begin{itemize}
\item An equivalent geometric shape for 
the nuclear waste: sphere (`sph'), cylinder (`cyl') or parallelepiped (`par'). 
\item An equivalent material for this waste, characterized by its chemical element with 
atomic number in \{1, \dots, 94\}, 
\item The bulk density of the waste, in $[0,1]$, 
\item The distance of measurement between the container and the measurement device, in $[80, 140]$ (cm), 
\item The mean width and lateral surfaces (in logarithmic scale) crossed by a gamma ray during the rotation of the object. 
\end{itemize}

After normalization, the characteristics of the input space can be summed up in Table \ref{tabIS}.

\begin{table}[!h]
\begin{center}
\begin{tabular}{ccc} \hline 
Name of the input & & Variation domain \\ \hline  
Distance  & & [0,1] \\
Density & & [0,1] \\
Width & & [0,1] \\
Surface & & [0,1] \\
Energy & & $\{ 1,2,3,4,5,6\}$ \\
Shape  & & \{sph, cyl, par\} \\
Chemical element & & $\{ 1,\dots,94\}$ \\ \hline
\end{tabular}
\caption{Description of the input variables for the nuclear application.}
\label{tabIS}
\end{center}
\end{table}

To recapture the notation of the previous sections, 
let $\mb{x}$ and $\mb{u}$ be the vectors gathering respectively the continuous and categorical inputs,
and $\mb{w} = (\mb{x}, \mb{u})$. 
For a given value of $\mb{w}$, Monte Carlo simulation codes as MCNP \citep{mcnp} can be used to model the measured scene and approach the value of 
$\epsilon(\mb{w}) = \epsilon(E,\mathcal{E})$. 
The mass $m$ can eventually be searched as the solution of the following optimization problem:

\begin{equation} \label{eq:OptProblem}
(m^\star, \mb{w}^\star) = \arg\min_{m, \mb{w}} \Vert \g{y}^{\text{obs}} - m \times \g{\epsilon}(\mb{w}) \Vert, \end{equation}

\noindent{}where $\Vert \cdot \Vert$ is the classical Euclidian norm, 
$\g{\epsilon}(\mb{w})$ and $\g{y}^{\text{obs}}$ respectively gather the values of $\epsilon$ and $y^{\text{SG}}$
at the six values of $E$ that are used for the measurements. 
To solve~(\ref{eq:OptProblem}), it is therefore necessary to compute $\g{\epsilon}$ at a high number of points. 
However, 
each evaluation of the MCNP code can be extremely demanding (between several minutes to several hours CPU for one evaluation). 
Thus, surrogate models have to be introduced to emulate the function $\mb{w} \mapsto \epsilon(\mb{w})$,
which is now investigated in the frame of Gaussian process regression.
We refer to \cite{Clement_et_al_2018} for the second step, namely the treatment of the inversion problem.

\subsection{Model settings}
For pedagogical purpose, a dataset of large size $N = 5076$ has been computed with the MNCP code.
The construction of the design of experiments was guided by the categorical inputs, 
such that each of the $6 \times 3 \times 94 = N/3$ combinations of levels appears $3$ times.
It was completed by a Latin hypercube of size $N$ to define the values of the four continuous inputs.\\
From this full dataset, a training set of size $n = 3 \times 94 = 282$ is extracted by selecting at random $3$ observations by chemical element.
The remaining $N - n$ points serve as test set.  

\begin{figure}[h] 
\centering
\begin{subfigure}{0.45\textwidth}
\includegraphics[width=\textwidth]{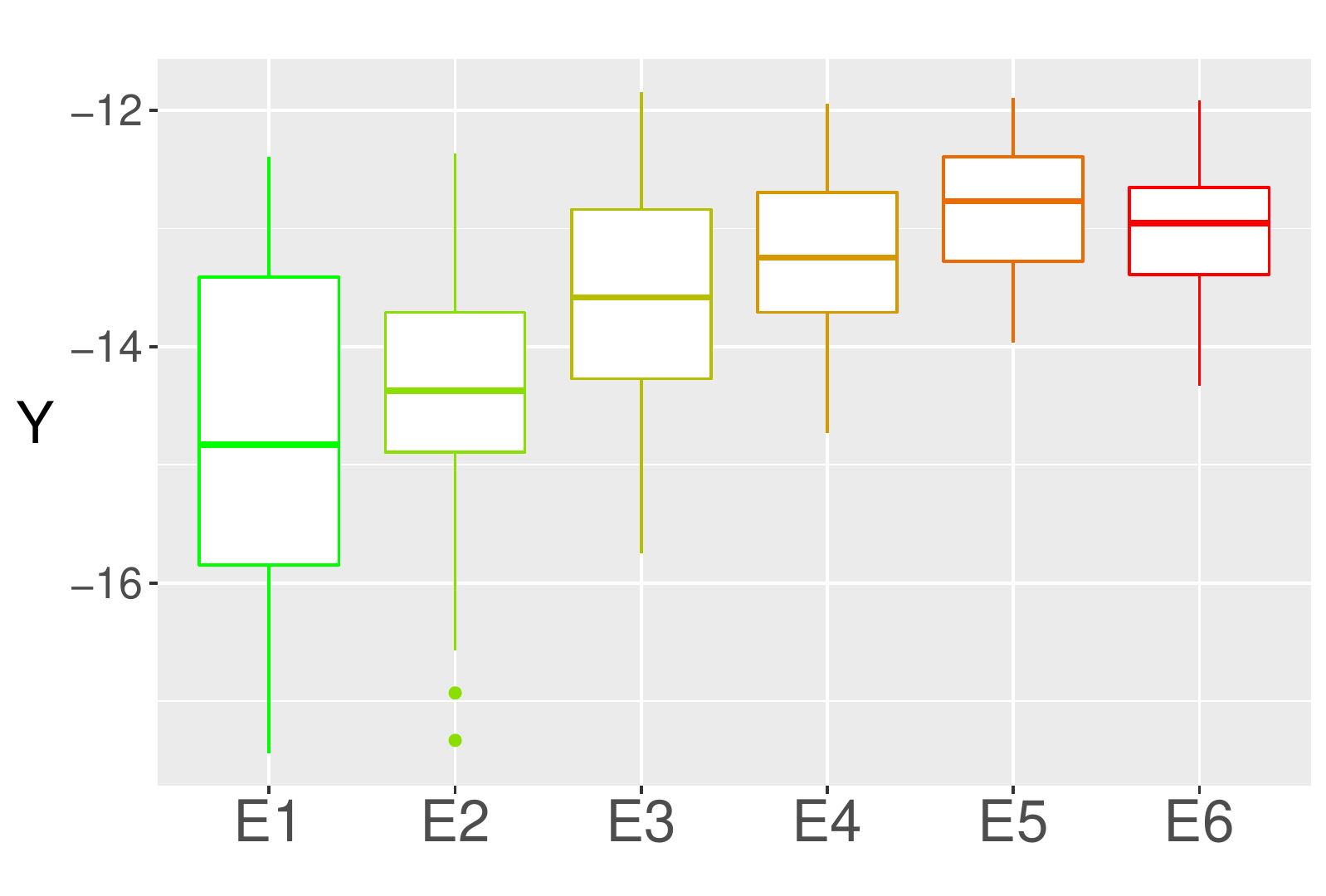}
\end{subfigure}%
\hspace*{0.05\linewidth}
\begin{subfigure}[h]{0.45\textwidth}
\includegraphics[width=\textwidth]{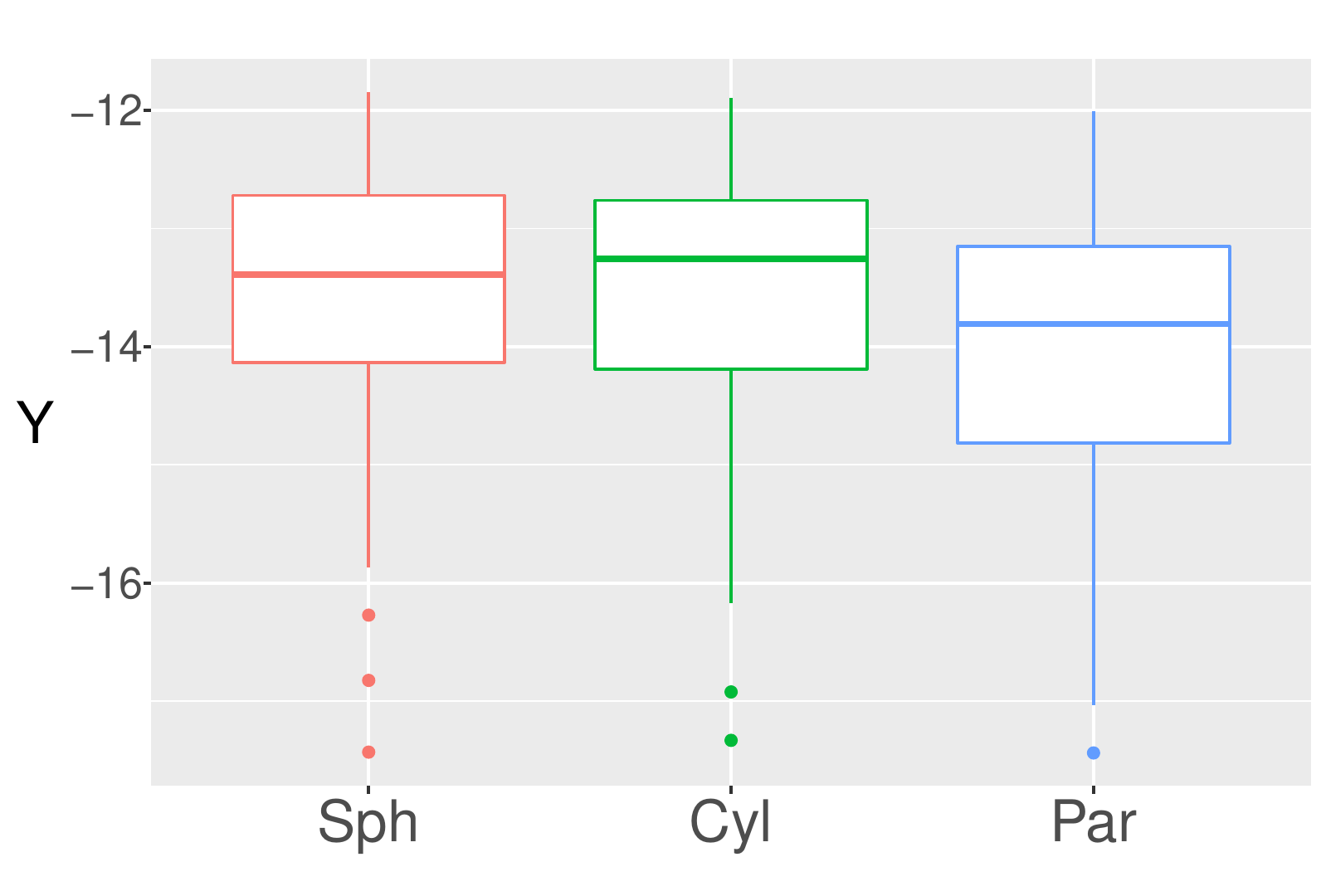}
\end{subfigure}%
\caption{$Y$ in function of the energy (left) and geometric shape (right).}
\label{fig:doe1}
\end{figure}

Model settings are now motivated by a graphical analysis.
In Figure~\ref{fig:doe1}, the output is displayed in function of the energy and the geometric shape. 
We observe that successive energy levels correspond to close values. 
This fact confirms that the energy is ordinal and we use the warped kernel defined by Eq. (\ref{eq:ordinalKernel}). 
The influence of the geometric shape is less obvious, and we have chosen an exchangeable (CS) covariance structure for it. 

\noindent
In Figure~\ref{fig:doe2}, $Y$ is displayed in function of the $94$ chemical elements, ordered by atomic number. 
Two important facts are the high number of levels and heteroscedasticity. 
For this purpose, the $94$ chemical elements are divided into $5$ groups, provided by expert knowledge and represented by colors.
This partition suggests to use a group kernel of the form (\ref{eq:blockCorMat}), 
where the within-group blocks $\within_g$ are CS covariance matrices.
In order to handle heteroscedasticity, the variance of $\within_g$ is assumed to depend on the group number $g$.
\begin{figure}[h] 
\begin{center}
\includegraphics[width=0.5\textwidth]{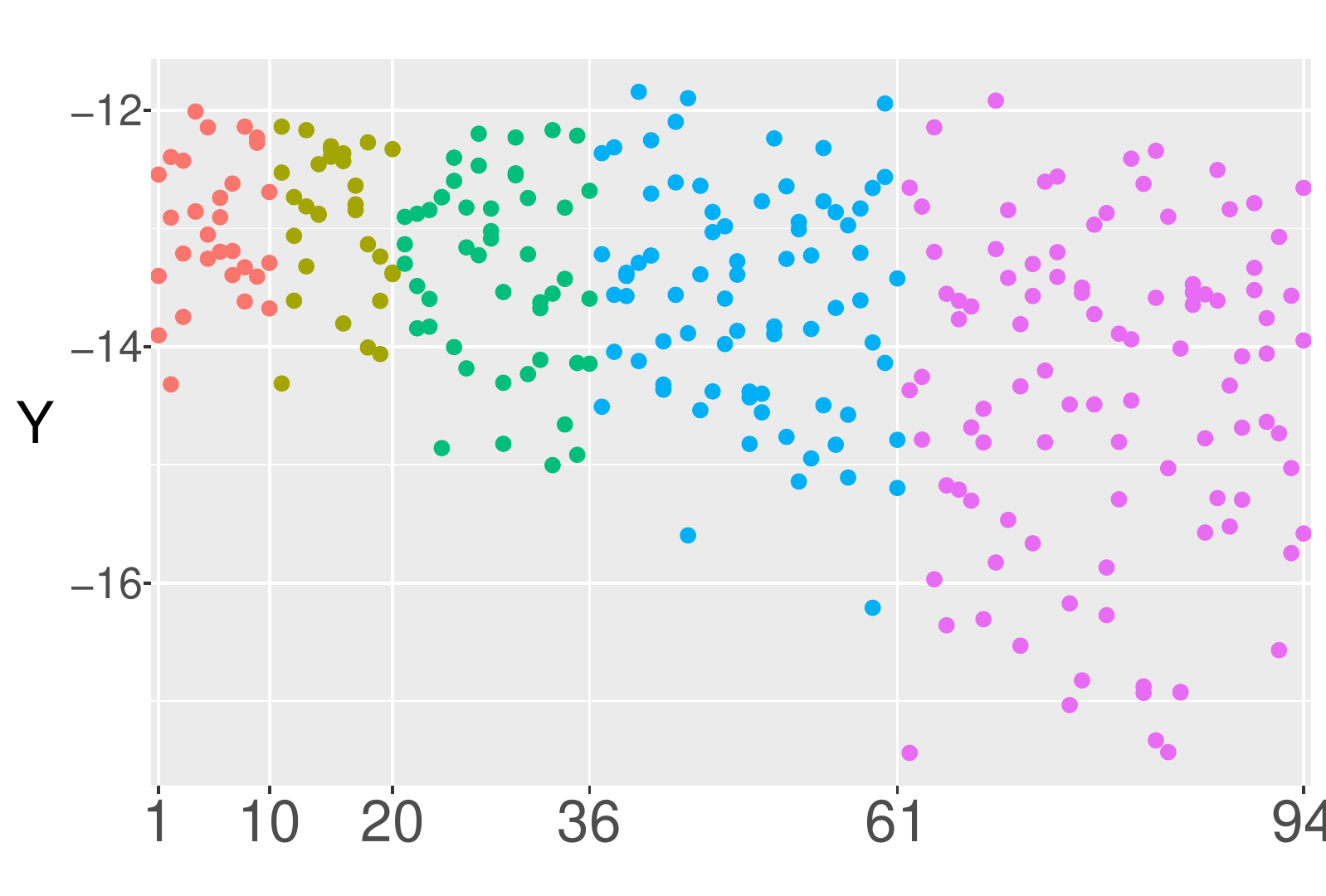}
\caption{Y in function of chemical elements, ordered by atomic number.}
\label{fig:doe2}
\end{center}
\end{figure} 

The influence of continuous variables can be observed by panels (not represented), and does not reveal useful information for our purpose.  
A Mat\'ern $5/2$ kernel is set for all continuous inputs, 
as we expect the output to be a regular function of the continuous inputs. 
Indeed, for this kernel, the corresponding Gaussian process is two times mean-square differentiable.\\
Finally, three candidate kernels for $\mb{w}$ are obtained by combining the kernels of input variables defined above, 
by sum, product or ANOVA (see Section~\ref{sec:background}).

\subsection{Results}
Following the model settings detailed above, 
Figure~\ref{rmse_benchmark}, Panel 3, presents the results obtained with $60$ random designs of size $n$ and three operations on kernels.
Furthermore, we have implemented three other kernels for the chemical element, in order to compare other model choices for this categorical input.
In the first panel, we grouped all the 94 levels in a single group.
In the second one, we kept the 5-group kernel but forced the between-group covariances to have a common value.
Finally, in the fourth panel, we considered that the levels were ordered by their atomic number, 
and used the warped kernel of Eq. (\ref{eq:ordinalKernel}) with a Normal transform.\\
\begin{figure}[h!] 
\centering
\includegraphics[width=\textwidth]{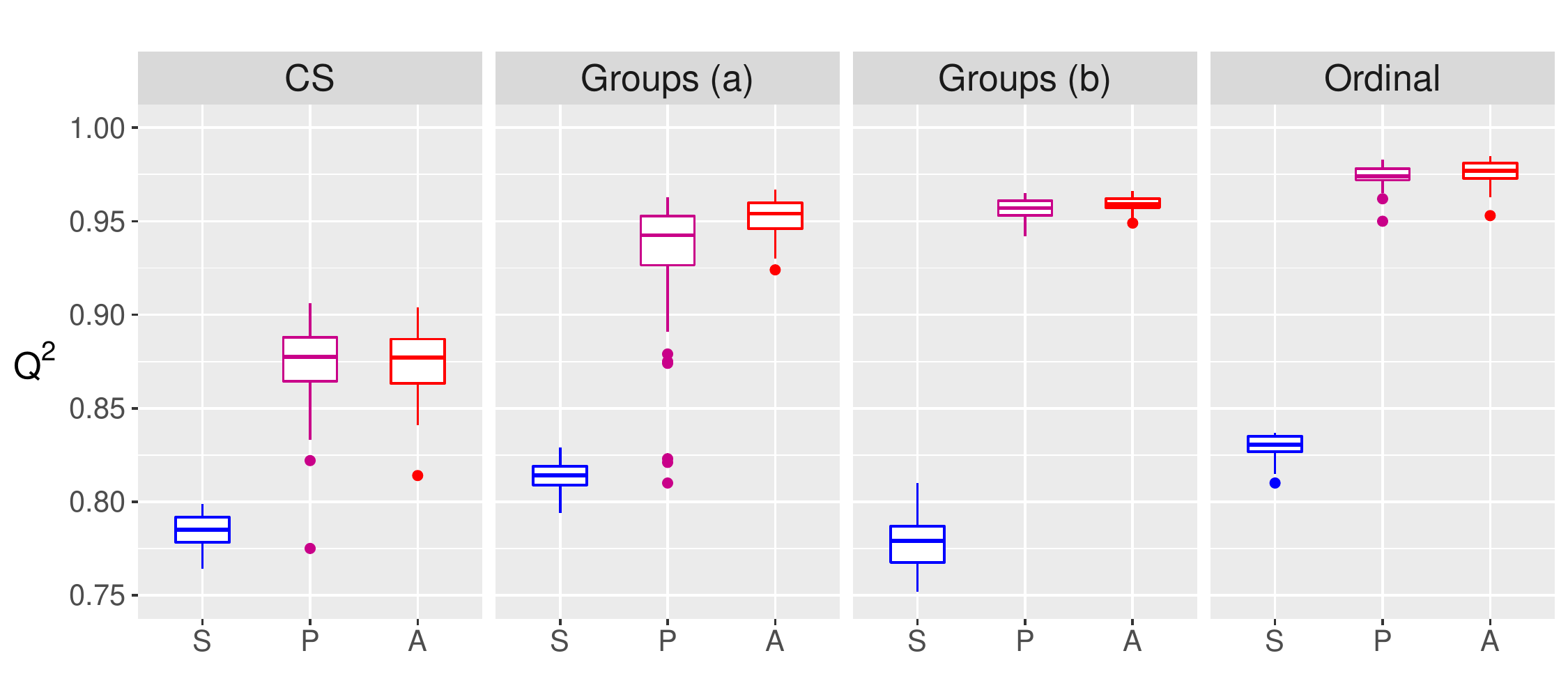} 
\caption{$Q^2$ of several GP models, based on $60$ random designs, corresponding to different model choices for the chemical element.
First panel: Single group. Second panel: 5 groups, with a common between-group covariance. Third panel: 5 groups. Fourth panel: Ordered levels. 
For each panel, three combinations of kernel are tested: sum (S), product (P) or ANOVA (A).
Total number of parameters used (panel order): `prod' = (12, 21, 30, 14), `sum' = `prod' + 6, `anova' = `prod' + 7.}
\label{rmse_benchmark}
\end{figure}  

First, comparing the three operations on kernels, 
we remark that in all the panels, additive kernels provide the worst results. 
This suggests the existence of interactions between different inputs of the simulator. 
Second, the \textrm{ANOVA} combination produces slight improvements, compared to the standard tensor-product,
both in terms of accuracy and stability with respect to design choice.\\
Now, comparing the four panels, we see that gathering the levels in a single group is the least efficient strategy.
The 5-group kernel gives very good performances, especially when the between-group covariances vary freely:
constraining them to be equal degrades the result.
Surprisingly here, the ordinal kernel gives the best performance. 
Indeed, for this application it was not intuitive to the experts that the chemical element can be viewed as an ordinal variable, 
simply sorted by its atomic number. 
This is confirmed by the correlation plots of Figure~\ref{fig:res1b}, 
corresponding to a model with a median $Q^2$ score. 
We can see that the estimated correlations between levels seems to decrease as the difference between levels increases,
an indication that the levels may be ordered by their atomic number.\\
Finally, we report several post-processing results. 
First, the estimated transformation of energy levels (Figure~\ref{fig:res2}, left) is concave and flat near high values,
which corresponds to the behaviour observed in Figure~\ref{fig:doe1} (left panel). 
In addition, the last three levels lead to similar results (Figure~\ref{fig:res2}, right). 
This corresponds to the fact that when the energy is high, the gamma ray almost always crosses the nuclear waste, 
leading to a high value for the output. 
Second, the estimated correlation among the sphere, the cylinder and the parallelepiped is very high ($c = 0.9$, 
Figure~\ref{fig:resGS}). This justifies considering a covariance structure for that categorical input, 
rather than using three independent GP models for all the three levels.

\begin{figure}
\centering 
\begin{subfigure}[h]{0.4\textwidth}
\centering 
\includegraphics[trim = 0cm 2cm 0cm 0cm, width=\textwidth]{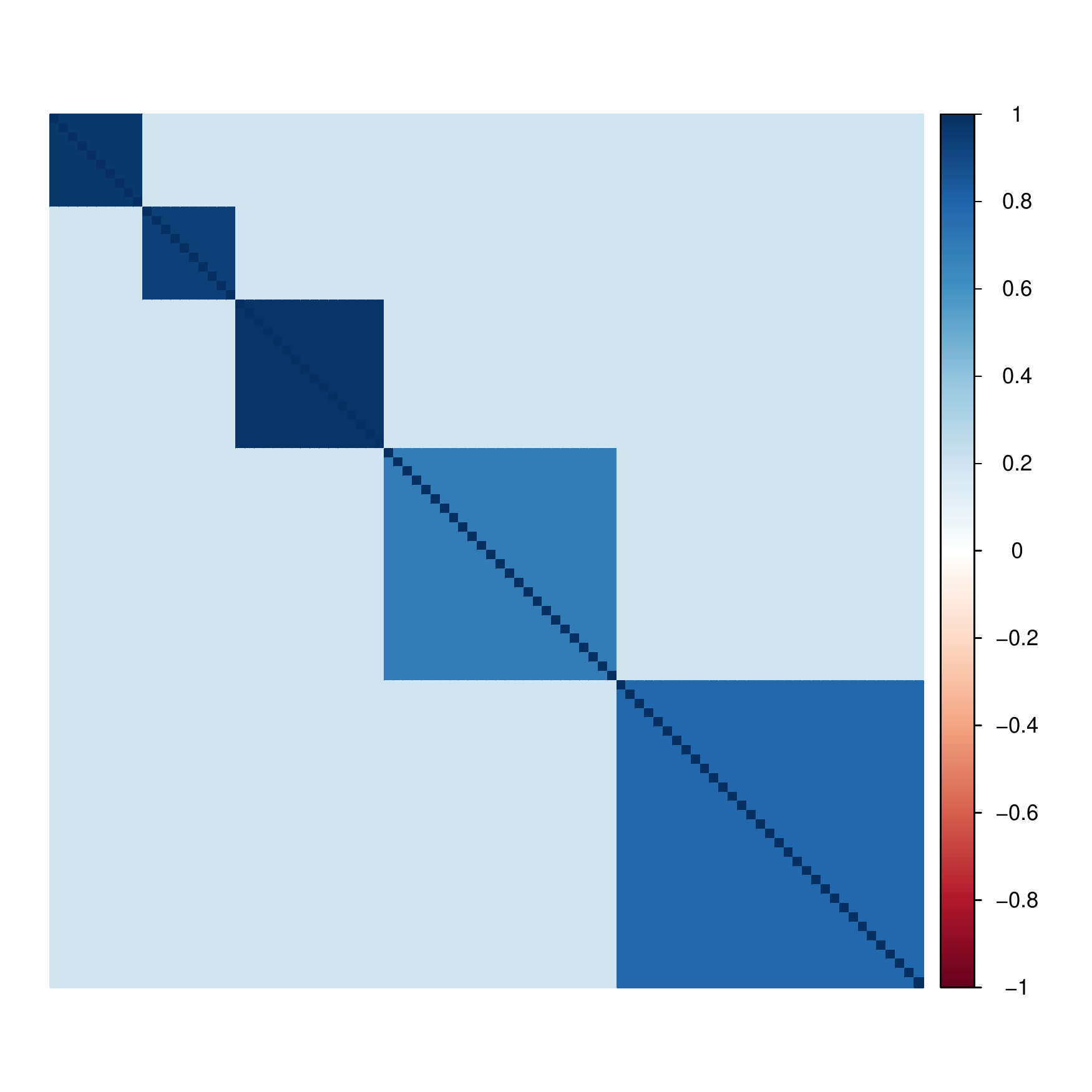}
\caption*{5 groups (a)}
\label{fig:res1ba}
\end{subfigure}%
\hspace*{0.05\linewidth}
\begin{subfigure}[h]{0.4\textwidth}
\centering 
\includegraphics[trim = 0cm 2cm 0cm 0cm, width=\textwidth]{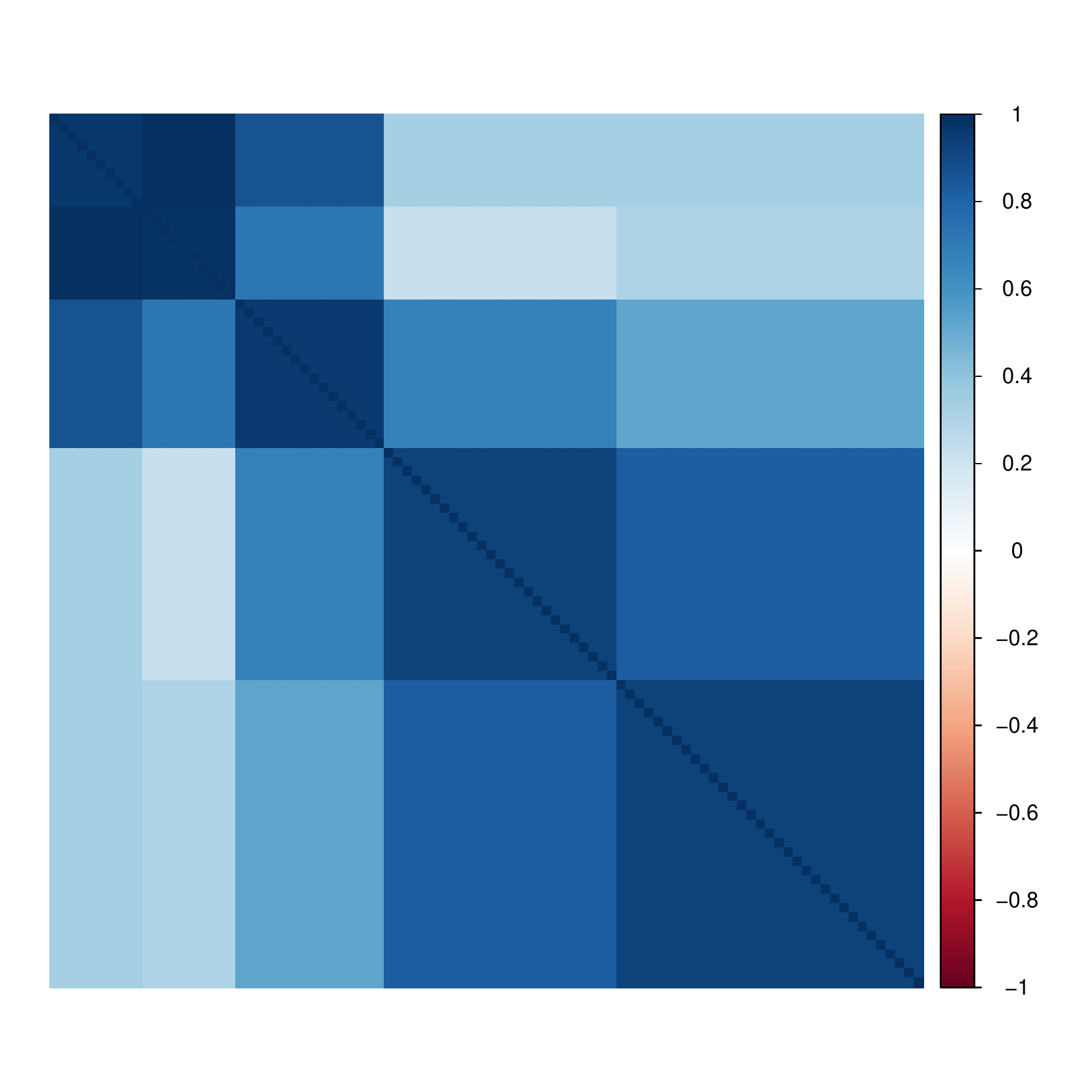}
\caption*{5 groups (b)}
\label{fig:groupe_gene1}
\end{subfigure}%
\caption{Estimated correlation kernel for the chemical element, 
with a common between-group covariance parameter (left) or different ones (right).}
\label{fig:res1b}
\end{figure}%

\begin{figure}[h] 
\centering
\begin{subfigure}[h]{0.4\textwidth}
\centering
\includegraphics[trim = 0cm 2cm 0cm 0cm, width=0.8\textwidth]{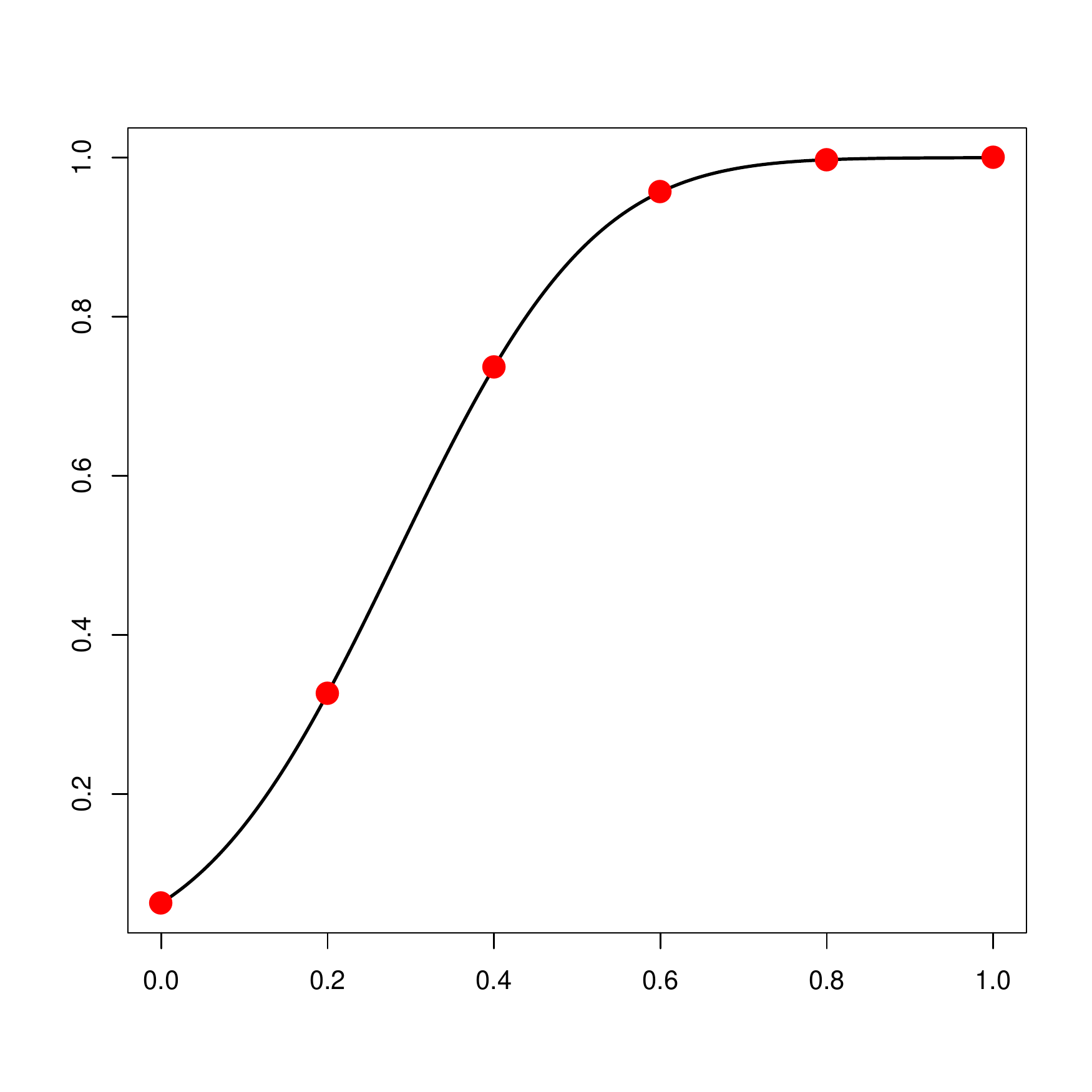}
\label{fig:res2a}
\end{subfigure}%
\hspace*{0.05\linewidth}
\begin{subfigure}[h]{0.4\textwidth}
\centering
\includegraphics[trim = 0cm 2cm 0cm 0cm, width=0.8\textwidth]{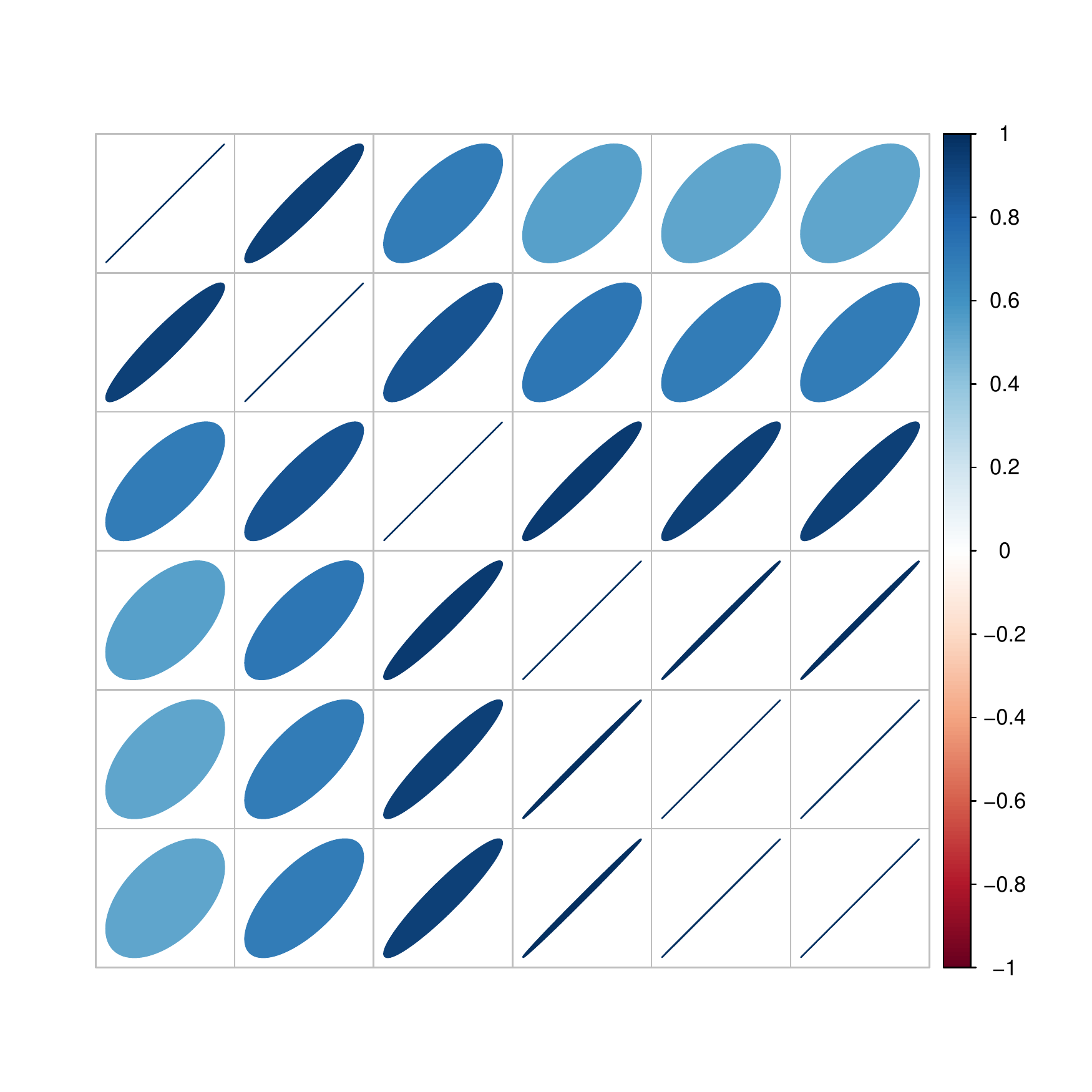}
\label{fig:res2b}
\end{subfigure}%
\caption{Estimated correlation kernel for the energy: 
estimated warping $F$ (left) and correlation structure (right).}
\label{fig:res2}
\end{figure}

\noindent
\begin{figure}[!h] 
\centering
\includegraphics[trim = 0cm 2cm 0cm 0cm, width=0.3\textwidth]{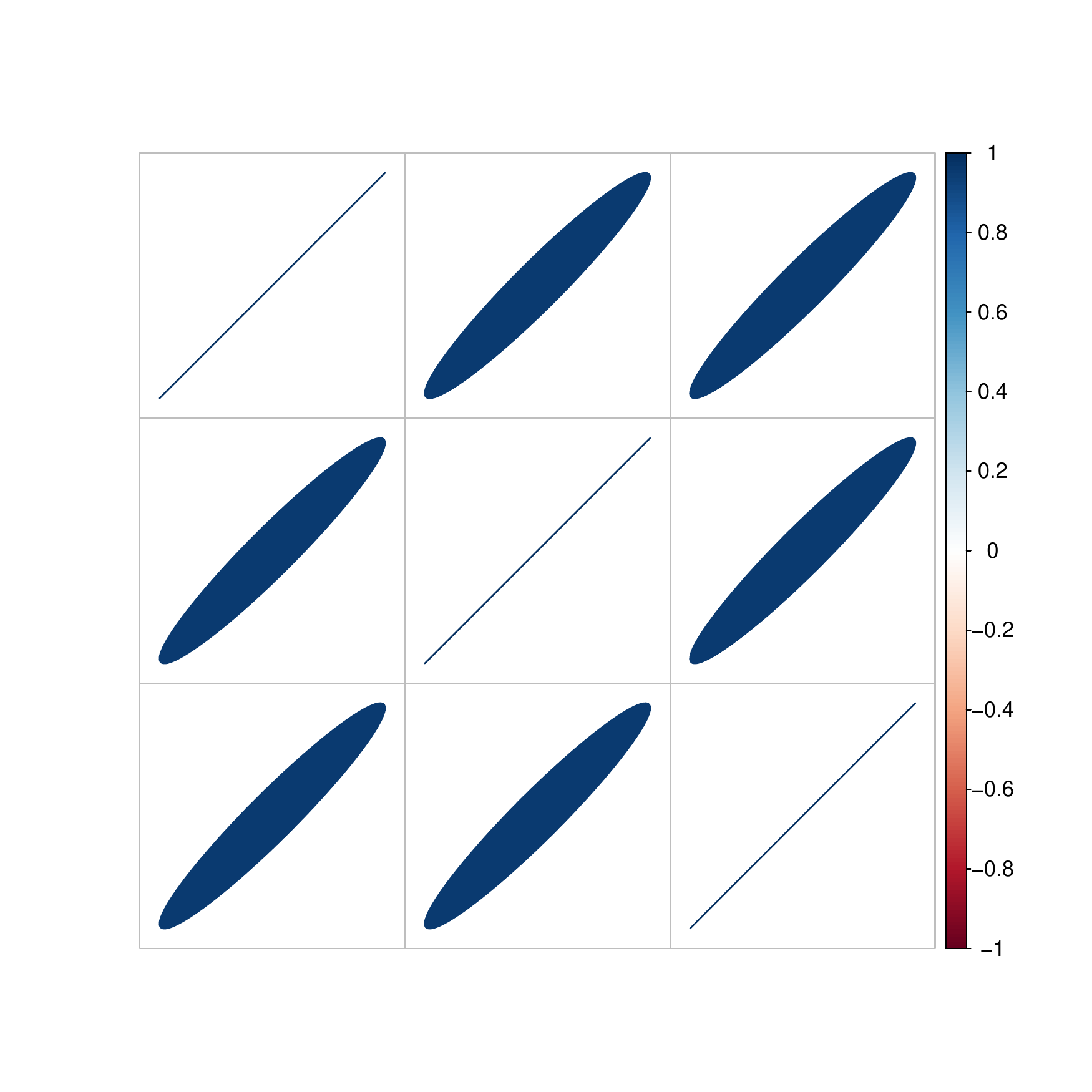}
\caption{Estimated correlation kernel for the geometric shape.}
\label{fig:resGS}
\end{figure}

\section{Conclusion} \label{sec:conclusion}

In the framework of GP regression with both continuous and categorical inputs,
we focus on problems where categorical inputs may have a potentially large number of levels $\level$,
partitioned in $\nbGroup \ll \level$ groups of various sizes. 
We provide new results about parsimonious block covariance matrices,
defined by a few within- and between-group covariance parameters.\\

We revisit a two-way nested Bayesian linear model, 
where the response term is defined as a sum of a group effect and a level effect.
We obtain a flexible parameterization of block covariance matrices
which automatically satisfy the positive definiteness conditions.
As a particular case, we recover situations where the within-group covariance structures are compound symmetry,
with possible negative correlations.
Furthermore, we show that the positive definiteness of a given block covariance matrix 
can be checked by verifying that the small matrix of size $\nbGroup$
obtained by averaging each block is positive definite.
This criterion can be useful if the proposed block matrix has a desirable constraint, 
such as homoscedasticity, which is not directly handled by the proposed parameterization.\\

We apply these findings on several toy functions as well as an application in nuclear engineering,
with 4 continuous inputs, 3 categorical inputs, one of them having 94 levels 
corresponding to chemical numbers in Mendeleev's table.
In this application, 5 groups were defined by experts.
The results, measured in terms of prediction accuracy, outperform
those obtained with simpler assumptions, such as gathering all levels into a single group.
It is gratifying that our nominal method performs almost as well as 
using the appropriate order with a warped kernel.\\

There are several perspectives for this work.
First, one future direction is to find a data-driven technique to recover groups of levels,
made more difficult when a small number of observations is available.
Similarly, if there is an order between levels, can we infer it from the data?
Second, the trend of the GP models with mixed continuous and categorical inputs could be made more complex,
in the same vein as works on GP models with continuous inputs. 

\section*{Software and acknowledgements}
Implementations have been done with the R packages \texttt{mixgp} and \texttt{kergp} \citep{kergp}. 
Illustrations use \texttt{ggplot2} \citep{ggplot2} and \texttt{corrplot} \citep{corrplot}.\\
This research was conducted within the frame of the Chair in Applied Mathematics OQUAIDO, gathering partners in technological research (BRGM, CEA, IFPEN, IRSN, Safran, Storengy) and academia (CNRS, Ecole Centrale de Lyon, Mines Saint-Etienne, University of Grenoble, University of Nice, University of Toulouse) around advanced methods for Computer Experiments. The authors thank the participants for fruitful discussions.
They are also grateful to the Isaac Newton Institute for Mathematical Sciences, Cambridge, 
for support and hospitality during the programme UNQ when work on this paper was undertaken
(EPSRC grant number EP/K032208/1).

\section*{Appendix}

\begin{proof}[Proof of Proposition~\ref{prop:CSparam}]
The vector $(\bm{\levelEffect}, \levelEffect_1 + \dots + \levelEffect_\level)$ is a centered Gaussian vector with covariance matrix
$$ \varIndiv  \begin{pmatrix}
\I{\level} & \one{\level} \\
\one{\level}^\top & \level
\end{pmatrix}.
$$
Hence the conditional distribution of $\bm{\levelEffect}$ knowing $\overline{\levelEffect} = 0$
is a centered Gaussian vector with covariance matrix
$$ \Cov(\bm{\levelEffect} \, \vert \,  \overline{\levelEffect} = 0 ) 
= \varIndiv [\I{\level} - \one{\level} \level^{-1} \one{\level}^\top] 
= \varIndiv [\I{\level} - \level^{-1} \J{\level}] .$$
Then, by using the independence between $\groupEffect$ and the $\levelEffect_\levelIndex$'s, we deduce
\begin{eqnarray*}
\Cov(\bm{\responseAnova} \, \vert \, \overline{\levelEffect} = 0 ) 
&=& \varCentre \J{\level} + \varIndiv [\I{\level} - \level^{-1} \J{\level}] \\
&=& \varIndiv \I{\level} + [\varCentre - \level^{-1} \varIndiv] \J{\level}.
\end{eqnarray*} 
We recognize the CS covariance matrix $\compound{\level} {v, c}$ with 
$ v = \varCentre + (1-\level^{-1}) \varIndiv$ 
and $c = \varCentre - \level^{-1} \varIndiv$.
As a covariance matrix, it is positive semidefinite. 
Furthemore, we have $c < v$ and $c + (\level-1)^{-1}v = \varCentre [1 + (\level-1)^{-1}] > 0$,
and the conditions of positive definiteness (\ref{eq:CSpdCondition}) are satisfied.\\
Conversely, let $\mb{C}$ be a positive definite CS matrix $\compound{\level}{v, c}$.
Then we have $ -(\level-1)^{-1} v < c < v$, 
and we can define $ \varCentre = \level^{-1}[v + (\level-1)c]$ and $\varIndiv = v - c$.
From the direct sense, we then obtain that the covariance matrix 
of $\bm{\responseAnova} \, \vert \, \overline{\levelEffect} = 0 $ 
is $\compound{\level} {v, c} = \mb{C}$.
\end{proof}

\begin{proof}[Proof of Proposition~\ref{prop:covDegenerate}]
The first part of the proposition is obtained by remarking that if $\varIndivG{} = \Cov(\mb{z})$, 
then $\varIndivGbar{} = \Var(\overline{z})$. 
Thus, assuming that $\mb{z}$ is centered, $\varIndivGbar{}=0$ is equivalent to $\overline{z}=0$ with probability $1$.\\
For the second part, notice that $\overline{z} = 0$ means that $\mb{z}$ is orthogonal to $\one{\level}$. 
Thus, one can write the expansion of $\mb{z}$ in the orthonormal basis $\one{\level}^\perp$ defined by $\constrastMat{}$.
Denoting by $\mb{t}$ the $(\level-1)$-vector of coordinates, we have $\mb{z} = \constrastMat{} \mb{t}$.
This gives $\varIndivG{} = \Cov(\constrastMat{} \mb{t}) = \constrastMat{} \, \Cov(\mb{t}) \constrastMat{}^\top$, and (\ref{eq:FgParam}) follows with $\varZeroAverageRed{} = \Cov(\mb{t})$.\\
To prove unicity, observe that, by definition, $\constrastMat{}^\top \constrastMat{} = \I{\level-1}, \constrastMat{}^\top \one{\level} = 0$.
Starting from $\varIndivG{} = \constrastMat{} \varZeroAverageRed{} \constrastMat{}^\top$, and multiplying by $\constrastMat{}^\top$ on the left and by $\constrastMat{}$ on the right, 
we get $\varZeroAverageRed{} = \constrastMat{}^\top \varIndivG{} \constrastMat{}$, showing that $\varZeroAverageRed{}$ is unique.\\
Now, let $\varIndivG{} = v [\I{\level} - \level^{-1} \J{\level}]$. Since $\J{\level} = \one{\level}\one{\level}^\top$, we obtain 
$$\varZeroAverageRed{} = \constrastMat{}^\top \varIndivG{} \constrastMat{} = v [\constrastMat{}^\top \constrastMat{} - \level^{-1} (\constrastMat{}^\top \one{\level})(\one{\level}^\top \constrastMat{})] = v \I{\level-1}.$$
As a by-product, notice that resubstituting $\varZeroAverageRed{}$ into 
$\varIndivG{} = \constrastMat{} \varZeroAverageRed{} \constrastMat{}^\top$ 
gives ${\constrastMat{} \constrastMat{}^\top = \I{\level} - \level^{-1} \J{\level}}$.\\
Finally, if $\mb{z} \sim \mathcal{N}(0, v \I{\level})$, then the properties of conditional Gaussian vectors 
lead immediately to $\Cov(\mb{z} \vert \overline{z}=0) = \varIndivG{}$.
\end{proof}

\begin{proof}[Proof of Theorem~\ref{prop:blockParam}]
The expressions of $\mb{\within}_g$ and $\between_{g,g'}$ are obtained directly 
by using the independence assumptions about $\bm{\groupEffect}$ and the $\bm{\levelEffect}$'s. 
Notice that $\varIndivG{g}$, the covariance matrix of  $\bm{\levelEffect}_{g / .}$ knowing $\overline{\levelEffect_{g / .}} = 0$, is centered by Proposition~\ref{prop:covDegenerate}. 
This gives $\mb{\within}_g - \overline{\within_g} \J{n_g} = \varIndivG{g}$, which is positive semidefinite.
Hence $\mb{\matCovCat}$ is a GCS block matrix.

Conversely, let $\mb{\matCovCat}$ be a positive semidefinite GCS block matrix.
Let $\widetilde{\mb{\matCovCat}}$ be the matrix obtained from $\mb{\matCovCat}$ by averaging each block.
Then $\widetilde{\mb{\matCovCat}}$ is also a positive semidefinite matrix.
Indeed, since $\mb{\matCovCat}$ is positive semidefinite, it is the covariance matrix of some vector $\mb{z}$.
Then $\widetilde{\mb{\matCovCat}}$ is the covariance matrix of $\widetilde{\mb{z}}$, the vector obtained from $\mb{z}$ by averaging by group:
$\widetilde{z}_g =  n_g^{-1} \sum_{\levelIndex \in G_g} z_\levelIndex$.
Thus there exists a centered Gaussian vector $(\groupEffect_g)_{1 \leq g \leq p}$ whose covariance matrix is 
$$\varCentreGroup = \widetilde{\mb{\matCovCat}}.$$
Now, for $g = 1, \dots, \nbGroup$, define 
$$\varIndivG{g} = \mb{\within}_g - \varCentreGroupCoef{g,g} \J{n_g} = \mb{\within}_g - \overline{\within_g} \J{n_g}.$$
Observe that $\varIndivGbar{g} = 0$, and by assumption $\varIndivG{g}$ is positive semidefinite.
Hence, from Proposition~\ref{prop:covDegenerate},
there exists a centered Gaussian vector $(\levelEffect_{g/j})_{1 \leq \levelIndex \leq n_g}$ such that 
$$\varIndivG{g} = \Cov(\bm{\levelEffect}_{g/.} \vert \overline{\levelEffect_{g / .}} = 0).$$
We can assume that $\bm{\levelEffect}_{1 / .}, \dots, \bm{\levelEffect}_{\nbGroup / .}$ are independent,
and $\bm{\groupEffect}$ and $\bm{\levelEffect}$ are independent.
Finally, we set $\responseAnova_{g/\levelIndex} = \groupEffect_g + \levelEffect_{g/\levelIndex}$. 
By the direct sense and (\ref{eq:blockCov}), we obtain that $\mb{\matCovCat}$ is the covariance matrix of 
$\bm{\responseAnova}$ conditional on $\{ \overline{\levelEffect_{g / .}} = 0, \, g=1, \dots, \nbGroup\}.$
\end{proof}

\begin{proof}[Proof of Corollary~\ref{prop:blockCSparam}] 
Let $\mb{\matCovCat}$ be a positive semidefinite GCS block matrix 
with CS diagonal blocks. 
Then the diagonal CS matrices are positive semidefinite, leading to $v_g - c_g \geq 0$.
Thus,
$$\mb{\within}_g - \overline{\within_g} \J{n_g} = (v_g - c_g) (\I{n_g} - n_g^{-1} \J{n_g})$$
is a positive semidefinite CS matrix. 
Hence, by Theorem~\ref{prop:blockParam}, $\mb{\matCovCat}$ is obtained from Model~(\ref{eq:blockMatModel}),
with $\Cov( \bm{\levelEffect}_{g / .} \vert \overline{\levelEffect_{g / .}} = 0) = (v_g - c_g) (\I{n_g} - n_g^{-1} \J{n_g})$.
By Prop.~\ref{prop:covDegenerate} (last part), we can choose $\varIndivG{g} = v_{\levelEffect_g} \I{n_g}$,
with $v_{\levelEffect_g} = v_g - c_g \geq 0$.\\
Conversely, if $\varIndivG{g} = v_{\levelEffect, g} \I{n_g}$, then by Prop.~\ref{prop:covDegenerate}, 
$\varIndivG{g} = \Cov(\bm{\levelEffect}_{g / .} \vert \overline{\levelEffect_{g / .}} = 0)$ 
is a CS covariance matrix.
The result follows by Theorem~\ref{prop:blockParam}.
\end{proof}

\begin{proof}[Proof of Theorem~\ref{prop:TandTbar}] 
First observe that Eq.~(\ref{eq:TandTbarEq}) is straightforward.\\
The direct sense of (i) has already been derived in the proof of Prop.~\ref{prop:blockParam}:
 $\widetilde{\mb{\matCovCat}}$ is the covariance matrix of $\widetilde{\mb{z}}$.\\
Conversely, inspecting that proof, we see that if $\widetilde{\mb{\matCovCat}}$ is positive semidefinite, 
then $\mb{\matCovCat}$ admits the representation~(\ref{eq:blockMatModel}).
Thus $\mb{\matCovCat}$ is a covariance matrix, and positive semidefinite.
This can be also proved from Eq.~(\ref{eq:TandTbarEq}):
the two terms of the right-hand side are positive semidefinite, thus so is their sum.\\

Now consider (ii). If $\mb{\matCovCat}$ is positive definite, 
then its diagonal blocks $\mb{\within}_g$ are all positive definite. 
Furthermore, by (i), $\widetilde{\mb{\matCovCat}}$ is positive semidefinite.
It if were singular, there would exist a non-zero vector $\bm {\gamma}$ 
such that $\bm{\gamma}^\top \widetilde{\mb{z}} =0$ with probability $1$.
This gives a non-trivial linear combination of $\mb{z}$ which is equal to zero,
and $\mb{\matCovCat}$ would not be positive definite. 
Thus $\widetilde{\mb{\matCovCat}}$ is positive definite.

Conversely, let us assume that $\widetilde{\mb{\matCovCat}}$ and all $\mb{\within}_g$'s are positive definite.
We will need the following Lemma:
\newtheorem{lemma}{Lemma}
\begin{lemma}
Let $\mb{F}$ be a centered covariance matrix of size $n$, with rank $n-1$. 
If for some vector $\bm{\beta}$, we have $\bm{\beta}^\top \mb{F} \bm{\beta}=0$, 
then $\bm{\beta}$ is a constant vector.
\end{lemma}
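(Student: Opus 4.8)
The plan is to exploit the positive semidefiniteness of $\mb{F}$ together with the centering hypothesis in order to identify its kernel exactly, and then to read off the conclusion. The key tool I would isolate first is the standard fact that, for a positive semidefinite matrix, the quadratic form vanishes precisely on the kernel: for any vector $\mb{x}$, one has $\mb{x}^\top \mb{F}\mb{x} = 0$ if and only if $\mb{F}\mb{x} = 0$. This follows at once by factoring $\mb{F} = \mb{L}\mb{L}^\top$ (a Cholesky or symmetric square-root factorization), so that $\mb{x}^\top \mb{F}\mb{x} = \Vert \mb{L}^\top \mb{x}\Vert^2$, which vanishes iff $\mb{L}^\top \mb{x} = 0$, iff $\mb{F}\mb{x} = \mb{L}\mb{L}^\top \mb{x} = 0$. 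Since $\mb{F}$ is a covariance matrix, it is positive semidefinite, so this fact applies.

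Next I would translate the two hypotheses into statements about the kernel of $\mb{F}$. The centering assumption says the average of the coefficients of $\mb{F}$ is zero, which is exactly $\one{n}^\top \mb{F}\one{n} = 0$. Applying the fact above with $\mb{x} = \one{n}$ yields $\mb{F}\one{n} = 0$, so the constant vector $\one{n}$ is a nonzero element of $\ker \mb{F}$. The rank hypothesis then says $\dim \ker \mb{F} = n - (n-1) = 1$, so the kernel is one-dimensional and therefore $\ker \mb{F} = \mathrm{span}(\one{n})$.

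Finally I would apply the same PSD fact to $\bm{\beta}$: from $\bm{\beta}^\top \mb{F}\bm{\beta} = 0$ we obtain $\mb{F}\bm{\beta} = 0$, that is $\bm{\beta} \in \ker \mb{F} = \mathrm{span}(\one{n})$, so $\bm{\beta}$ is proportional to $\one{n}$, i.e. a constant vector, which is the claim. There is no serious obstacle here; the only point that must be handled with care is the equivalence $\mb{x}^\top \mb{F}\mb{x} = 0 \Leftrightarrow \mb{F}\mb{x} = 0$ for positive semidefinite matrices, which is invoked twice — once to place $\one{n}$ in the kernel and once to place $\bm{\beta}$ there — and the elementary dimension count that forces the kernel to be spanned by $\one{n}$.
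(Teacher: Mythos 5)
Your proof is correct and follows essentially the same route as the paper's: both factor $\mb{F}$ through a square root to show that the vanishing of the quadratic form places a vector in the (one-dimensional) nullspace, which contains $\one{n}$ by the centering hypothesis, forcing $\bm{\beta}$ to be a multiple of $\one{n}$. The only cosmetic difference is that the paper works with a symmetric square root $\mb{L}$ satisfying $\mb{L}^2=\mb{F}$ and phrases the argument via $\mb{L}\one{n}=0$ and $\mb{L}\bm{\beta}=0$, whereas you state the kernel characterization as a standalone fact and apply it twice.
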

\begin{proof}
A symmetric square root $\mb{L}$ with $\mb{L}^2 = \mb{F}$ has also rank $n-1$, 
hence the nullspace of $\mb{L}$ is one-dimensional.
Since $\mb{F}$ is centered, we have $\one{n}^\top \mb{F} \one{n}= 0$.
Thus $\mb{L} \one{n}=0$.
Similarly, $\mb{L}\bm{\beta}=0$. The result follows.
\end{proof}

\noindent Now, let $\bm{\beta}$ be a vector of length $\sum_g  n_g$ such that 
$\bm{\beta}^\top \mb{\matCovCat} \bm{\beta} = 0$. 
Eq. (\ref{eq:TandTbarEq}) gives
$$\bm{\beta}^\top \mb{\matCovCat} \bm{\beta} =:  U + V$$
where $U$ and $V$ are obtained from each term in (\ref{eq:TandTbarEq})
by left-multiplying by $\bm{\beta}^\top$ and right-multiplying by $\bm{\beta}$.
Since $U$ and $V$ are non-negative, we must have $U=V=0$.
\begin{itemize}
\item $V=0$ implies that all subvectors $\bm{\beta}_g$ corresponding to groups $g$ verify
$\bm{\beta}_g^\top [\mb{\within}_g - \overline{\within_g} \J{n_g}] \bm{\beta}_g = 0$.
This implies that they are all constant vectors.
Indeed, since (by assumption) $\mb{\within}_g$ has rang $n_g$ and $\overline{\within_g} \J{n_g}$ has rank 1, 
the centered matrix $\mb{F}_g := \mb{\within}_g - \overline{\within_g} \J{n_g}$ must have rank $n_g - 1$,
and the result is given by the lemma.
\item $U=0$ gives $\mb{X}^\top \bm{\beta}=0$ by positive definiteness of $\widetilde{\mb{\matCovCat}}$.
Now, $\mb{X}^\top \bm{\beta}$ is a vector of length $G$ whose component $g$
is the sum of $\bm{\beta}_g$ coefficients. 
\end{itemize}
Gathering the conclusions of the two items gives $\bm{\beta} = 0$.
Finally $\mb{T}$ is positive definite.

\paragraph{Remark.} 
Notice that for (ii) we needed to add the condition that $\mb{\within}_g$ is positive definite for all $g=1, \dots, \nbGroup$.
However, adding an equivalent condition for (i), namely that $\mb{\within}_g$ is positive semidefinite, 
was not necessary. Indeed, it is a consequence of the fact that $\mb{\within}_g - \overline{\within_g} \J{n_g}$ is positive semidefinite and that $\widetilde{\mb{\matCovCat}}$ is positive semidefinite, which implies 
$\widetilde{\matCovCat}_{g,g} = \overline{\within_g} \geq 0$.
\end{proof}

\begin{proof}[Proof of Proposition~\ref{prop:exclusionProperty}.]
By assumption, $\widetilde{\matCovCat}_{g,g} = \overline{\within_g} = 0$.
Now by Theorem~\ref{prop:TandTbar}, $\widetilde{\mb{\matCovCat}}$ is positive semidefinite.
As its diagonal term $\widetilde{\matCovCat}_{g,g}$ is zero, 
it implies that all the terms on the same row are zero.
Hence  for all $g' \neq g: \, \widetilde{\matCovCat}_{g,g'} = 0$. 
As $\mb{\matCovCat}$ is a GCS covariance matrix, its off-diagonal blocks are constant,
and thus, $T_{g,g'} = \widetilde{T}_{g,g'} = 0$, which proves that $\mb{y}_g$ and $\mb{y}_{-g}$ are non correlated.
The result follows by Gaussianity of $\mb{y}$.
\end{proof}

\begin{proof}[Proof of the assertion of Section~\ref{sec:summary}.] 
We claim that $\mb{\matCovCat}$ is invertible iff $\varCentreGroup$ and all the
 $\varZeroAverageRed{g}$'s are invertible.
Indeed, from Theorem~\ref{prop:TandTbar}, $\mb{\matCovCat}$ is invertible iff 
$\widetilde{\mb{\matCovCat}}$ and all the $\mb{\within}_g$'s are invertible.
Now, from Theorem~\ref{prop:blockParam}, $\widetilde{\mb{\matCovCat}} = \varCentreGroup$,
and 
$$\mb{\within}_g = \left[ \varCentreGroup \right]_{g,g} \J{n_g} 
+ \mb{A}_g \mb{\varZeroAverageRed}_{g} \mb{A}_g^\top,$$
where $\mb{A}_g$ is a $n_g$ by $n_g-1$ matrix whose colums vectors are an orthonormal basis of $\one{n_g}^\perp$. 
Left multiplying by $\mb{A}_g^\top$ and right multiplying by $\mb{A}_g$, 
and remarking that $\J{n_g}=\one{n_g}\one{n_g}^\top$, we obtain:
$ \mb{A}_g^\top \mb{\within}_g \mb{A}_g = \mb{\varZeroAverageRed}_{g}$.
It is now clear that $\mb{\within}_g$ is invertible 
iff $\mb{\varZeroAverageRed}_{g}$ is.
\end{proof}

\bibliographystyle{abbrvnat}

\bibliography{groupCovariance} 

\begin{thebibliography}{25}
\providecommand{\natexlab}[1]{#1}
\providecommand{\url}[1]{\texttt{#1}}
\expandafter\ifx\csname urlstyle\endcsname\relax
  \providecommand{\doi}[1]{doi: #1}\else
  \providecommand{\doi}{doi: \begingroup \urlstyle{rm}\Url}\fi

\bibitem[Chevalier et~al.(2014)Chevalier, Bect, Ginsbourger, Vazquez, Picheny,
  and Richet]{Chevalier_2014}
C.~Chevalier, J.~Bect, D.~Ginsbourger, E.~Vazquez, V.~Picheny, and Y.~Richet.
\newblock Fast parallel kriging-based stepwise uncertainty reduction with
  application to the identification of an excursion set.
\newblock \emph{Technometrics}, 56\penalty0 (4):\penalty0 455--465, 2014.

\bibitem[Clement et~al.(2018)Clement, Saurel, and Perrin]{Clement_et_al_2018}
A.~Clement, N.~Saurel, and G.~Perrin.
\newblock Stochastic approach for radionuclides quantification.
\newblock \emph{EPJ Web Conf.}, 170:\penalty0 06002, 2018.
\newblock URL \url{https://doi.org/10.1051/epjconf/201817006002}.

\bibitem[Deng et~al.(2017)Deng, Lin, Liu, and Rowe]{Deng2017}
X.~Deng, C.~D. Lin, K.-W. Liu, and R.~K. Rowe.
\newblock Additive {G}aussian process for computer models with qualitative and
  quantitative factors.
\newblock \emph{Technometrics}, 59\penalty0 (3):\penalty0 283--292, 2017.

\bibitem[Deville et~al.(2015)Deville, Ginsbourger, and Roustant]{kergp}
Y.~Deville, D.~Ginsbourger, and O.~Roustant.
\newblock \emph{kergp: {G}aussian Process Laboratory}, 2015.
\newblock URL \url{https://CRAN.R-project.org/package=kergp}.
\newblock Contributors: N. Durrande. R package version 0.2.0.

\bibitem[Fox and Dunson(2012)]{multiresolutionGP_Fox_Dunson}
E.~Fox and D.~B. Dunson.
\newblock Multiresolution {G}aussian processes.
\newblock In F.~Pereira, C.~J.~C. Burges, L.~Bottou, and K.~Q. Weinberger,
  editors, \emph{Advances in Neural Information Processing Systems 25}, pages
  737--745. Curran Associates, Inc., 2012.
\newblock URL
  \url{http://papers.nips.cc/paper/4682-multiresolution-{G}aussian-processes.pdf}.

\bibitem[Goorley et~al.(2013)Goorley, Fensin, and McKinney]{mcnp}
J.~T. Goorley, M.~Fensin, and G.~McKinney.
\newblock \emph{MCNP6 User’s Manual, Version 1.0}, May 2013.

\bibitem[Gower(1982)]{gower1982}
J.~C. Gower.
\newblock Euclidean distance geometry.
\newblock \emph{Math. Sci}, 7\penalty0 (1):\penalty0 1--14, 1982.

\bibitem[Guillot(2015)]{guillot}
N.~Guillot.
\newblock \emph{Quantification gamma de radionucl\'{e}ides par mod\'{e}lisation
  \'{e}quivalente}.
\newblock PhD thesis, {Universit\'{e} Blaise Pascal - Clermont-Ferrand II},
  France, 2015.

\bibitem[Khuri and Good(1989)]{Khuri1989}
A.~Khuri and I.~Good.
\newblock The parameterization of orthogonal matrices : a review mainly for
  statisticians : review paper.
\newblock \emph{South African Statistical Journal}, 23\penalty0 (2):\penalty0
  231--250, 1989.

\bibitem[Knoll(2010)]{knoll}
G.~F. Knoll.
\newblock \emph{Germanium Gamma-Ray Detectors}, volume~3.
\newblock John Wiley \& Sons, 2010.

\bibitem[Lindley and Smith(1972)]{Lindley_Smith_1972}
D.~Lindley and A.~Smith.
\newblock Bayes estimate for the linear model (with discussion) part 1.
\newblock \emph{Journal of the Royal Statistical Society, Ser B}, 34\penalty0
  (1):\penalty0 1--41, 1972.

\bibitem[McCullagh(1980)]{mccullagh1980}
P.~McCullagh.
\newblock Regression models for ordinal data.
\newblock \emph{Journal of the Royal Statistical Society. Series B
  (Methodological)}, 42\penalty0 (2):\penalty0 109--142, 1980.

\bibitem[Park and Choi(2010)]{Park_Shoi_2010}
S.~Park and S.~Choi.
\newblock Hierarchical {G}aussian process regression.
\newblock In M.~Sugiyama and Q.~Yang, editors, \emph{Proceedings of 2nd Asian
  Conference on Machine Learning}, volume~13 of \emph{Proceedings of Machine
  Learning Research}, pages 95--110, 2010.

\bibitem[Pinheiro and Bates(2009)]{pinheiro2009}
J.~Pinheiro and D.~Bates.
\newblock \emph{Mixed-Effects Models in S and S-PLUS}.
\newblock Statistics and Computing. Springer New York, 2009.

\bibitem[Pinheiro and Bates(1996)]{Pinheiro1996}
J.~C. Pinheiro and D.~M. Bates.
\newblock Unconstrained parametrizations for variance-covariance matrices.
\newblock \emph{Statistics and Computing}, 6\penalty0 (3):\penalty0 289--296,
  1996.

\bibitem[Qian(2012)]{qian2012}
P.~Z.~G. Qian.
\newblock Sliced {L}atin hypercube designs.
\newblock \emph{Journal of the American Statistical Association}, 107\penalty0
  (497):\penalty0 393--399, 2012.

\bibitem[Qian et~al.(2007)Qian, Wu, and Wu]{qian2007}
P.~Z.~G. Qian, H.~C.~F. Wu, and J.~Wu.
\newblock {G}aussian process models for computer experiments with qualitative
  and quantitative factors.
\newblock Technical report, Department of statistics, University of Wisconsin,
  2007.

\bibitem[Rasmussen and Williams(2006)]{rasmussen2005}
C.~Rasmussen and C.~Williams.
\newblock \emph{{G}aussian Processes for Machine Learning}.
\newblock The MIT Press, 2006.

\bibitem[Sacks et~al.(1989)Sacks, Welch, Mitchell, and Wynn]{Sacks_1989}
J.~Sacks, W.~Welch, T.~Mitchell, and H.~Wynn.
\newblock Design and analysis of computer experiments.
\newblock \emph{Statistical Science}, \penalty0 (4):\penalty0 409--435, 1989.

\bibitem[Shepard et~al.(2015)Shepard, Brozell, and Gidofalvi]{Ron2015}
R.~Shepard, S.~R. Brozell, and G.~Gidofalvi.
\newblock The representation and parametrization of orthogonal matrices.
\newblock \emph{The Journal of Physical Chemistry A}, 119\penalty0
  (28):\penalty0 7924--7939, 2015.

\bibitem[Smith(1973)]{smith1973}
A.~Smith.
\newblock Bayes estimates in one-way and two-way models.
\newblock \emph{Biometrika}, 60\penalty0 (2):\penalty0 319--329, 1973.

\bibitem[Venables and Ripley(2002)]{Venable_Ripley_MASS}
W.~N. Venables and B.~D. Ripley.
\newblock \emph{Modern Applied Statistics with S}.
\newblock Springer, 4 edition, 2002.

\bibitem[Wei and Simko(2016)]{corrplot}
T.~Wei and V.~Simko.
\newblock \emph{corrplot: Visualization of a Correlation Matrix}, 2016.
\newblock R package version 0.77.

\bibitem[Wickham(2009)]{ggplot2}
H.~Wickham.
\newblock \emph{ggplot2: Elegant Graphics for Data Analysis}.
\newblock Springer-Verlag New York, 2009.
\newblock ISBN 978-0-387-98140-6.
\newblock URL \url{http://ggplot2.org}.

\bibitem[Zhang and Notz(2015)]{zhang2015}
Y.~Zhang and W.~I. Notz.
\newblock Computer experiments with qualitative and quantitative variables: A
  review and reexamination.
\newblock \emph{Quality Engineering}, 27\penalty0 (1):\penalty0 2--13, 2015.

\end{thebibliography}

\end{document}